\newcommand{\C}{\mathbb{C}}
\newcommand{\R}{\mathbb{R}}
\newcommand{\N}{\mathbb{N}}
\newcommand{\tleq}{\trianglelefteq}
\newtheorem{theorem}{Theorem}
\newtheorem{corollary}{Corollary}
\newtheorem{definition}{Definition}
\newtheorem{remark}{Remark}
\newtheorem{proposition}{Proposition}
\newtheorem{lemma}{Lemma}
\title{Majorant series  for the  $N$-body problem}
\author[M. Anto\~nana]{Mikel Anto\~nana}
\address{\hspace*{-12pt}M. Anto\~nana: University of the Basque Country (UPV/EHU),  Donostia-San Sebasti\'an, Spain.}
\email{Mikel.Antonana@ehu.eus}
\author[Ph. Chartier]{Philippe Chartier}
\address{\hspace*{-12pt}Ph.~Chartier: Univ Rennes, INRIA-MINGuS, CNRS, IRMAR-UMR 6625, F-35000 Rennes, France}
\email{Philippe.Chartier@inria.fr}
\author[A. Murua]{Ander Murua}
\address{\hspace*{-12pt}A. Murua: University of the Basque Country (UPV/EHU),  Donostia-San Sebasti\'an, Spain.}
\email{Ander.Murua@ehu.eus}
\begin{document}
\maketitle
\begin{abstract}
As a follow-up of  a previous work of the authors, this work considers {\em uniform global time-renormalization  functions} for the {\em gravitational} $N$-body problem. It improves on the estimates of the radii of convergence obtained therein by using a completely different technique, both for  the solution to the original equations and for the solution of the renormalized ones. The aforementioned technique which the new estimates are built upon is known as {\em majorants} and allows for an easy application of simple operations on power series.  The new radii of convergence so-obtained are approximately doubled with respect to our previous estimates. In addition, we show that {\em majorants} may also be constructed to estimate the local error of the {\em implicit midpoint rule} (and similarly for  Runge-Kutta methods) when applied to the time-renormalized $N$-body equations and illustrate the interest of our results for numerical simulations of the Solar System.
\end{abstract}
\section{Introduction}
We are concerned here with the solution of the $N$-body problem: considering $N$ masses $m_i, \, i = 1,\ldots,N$, moving in a three-dimensional space under the influence of gravitational forces, Newton's law describes the evolution of their {\em positions} $q_i$ and  {\em velocities} $v_ i$ for $i = 1,\ldots,N$ through the equations
\begin{equation} 
\label{eq:Nbody2}
m_i \frac{d^2 q_i}{dt^2}= \sum_{j \neq i} \frac{G m_i m_j}{\|q_j-q_i\|^3} (q_j-q_i),
\end{equation}
where $G$ is the gravitational constant and $\|q_j-q_i\|$ is the distance between $q_i$ and $q_j$ in the Euclidean norm of $\R^3$.

The $N$-body equations are of great importance in physics and celestial mechanics in particular and the question of representing their solution  in the form of a convergent series has been a long-standing problem. For an introduction on the historical and practical aspects of the subject, we refer to \cite{antonana2020}.  To make a long story short, let us just recall that the problem was solved for $N = 3$ by Karl Frithiof Sundman \cite{sundman} and Qiu-Dong Wang for the $N \geq 3$-case~\cite{wang} in the 1990s.

As an intermediate step to obtain a representation of each solution of (\ref{eq:Nbody2}) in terms of a series expansion convergent for all $t$ in its maximal interval of existence, both authors rewrite the $N$-body equations (\ref{eq:Nbody2}) in terms of
 a new independent variable $\tau$ related to the physical time $t$ by 
\begin{equation} \label{eq:t-r}
\frac{d\tau}{d t} = s(q(t))^{-1}, \quad \tau(0)=0,
\end{equation}
for an appropriate {\em time-renormalization function} $s(q)$ depending on the positions 
$$
q = (q_1,\ldots,q_N).
$$
 %and the velocity vectors $v=(v_1, \dots, v_N)= \left(\frac{d q_1}{dt}, \ldots, \frac{d q_N}{dt}\right)$ by
Note that time-renormalization is also referred to as   {\em time-transformation} and has been studied by several authors in the context of geometric integration \cite{hairer97,huang97,mikkola97,calvo98,hairer05}. 
 In \cite{antonana2020}, we proposed new time-renormalization functions for the purpose of simulating the  $N$-body problem with constant stepsizes without degrading the accuracy of the computed trajectories. In contrast with previously known functions \cite{sundman, wang}, ours  depend not only on positions but also on velocities. 
 
 Our time-renormalized equations were derived  by considering estimates of the domain of existence of the holomorphic extension of maximal solutions of  the $N$-body problem (\ref{eq:Nbody2}) to the complex domain. Noticeably, these global time-renormalizations were shown to be {\em uniform}  in the sense that the solution of the time-renormalized equations in the fictitious time $\tau$ can be extended {\em analytically} to the strip 
$$
\{\tau \in \mathbb{C}\ : \ |\mathrm{Im}(\tau)| \leq \beta\}
$$
for some $\beta>0$ independent of the initial conditions $(q^0,v^0) \in \mathbb{R}^{6N}$ (provided that $q^0_i \neq q^0_j$ for all $i \neq j$) and the masses $m_i$, $1\leq i \leq N$.

Our main contribution in this paper is to explicitly construct {\em majorants} for the expansions of the solution of the $N$-body problem as a series in powers of either $t$ or $\tau$, in addition to majorants for the expansion of discrete solutions of the time-renormalized equations in series of powers of the step-size. That technique, allows us to prove the our global time-renormalized equations are uniform (in the sense above) with a larger value of $\beta$. Furthermore, we obtain a similar result for the discrete solution, which provides a solid theoretical basis for the application of numerical schemes with constant time-steps to the time-renormalized equations.

Although not fully resorting to {\em Geometric Numerical Integration}, our work is related to this special issue as it paves the way for the development of geometric integrators for the $N$-body problem. As a matter of fact, constant step-size integration in physical time is not feasible for trajectories with close approaches (such as those encountered in gravitational problems), as it would require unaffordable computations. In contrast,  uniform global time-renormalization allows to numerically integrate efficiently the $N$-body problem with fixed step-sizes, as usually required for the geometric numerical integration of {\em reversible systems} and Hamiltonian systems. The results on the discretization of time-renormalizations considered in the present work are directly applicable for schemes that preserve the reversible character of the $N$-body problem. However, the time-renormalized equations we consider in this work do not keep the Hamiltonian character of the $N$-body equations in physical time, and thus are not appropriate for symplectic schemes. The standard way to proceed (see for instance  \cite{hairer06, leimkuhler05,mikkola97}) for symplectic integrators is to consider a Hamiltonian system in the extended phase space obtained by considering two additional state variables: the physical time $t$ and its conjugate momentum $\mathcal P$. The time-renormalization transformation (\ref{eq:t-r}) from the physical time $t$ to the fictitious time $\tau$ is achieved by considering the Hamiltonian
$$
\Gamma(q,p,t,\mathcal P) =s(q,v) (H(q,p)+\mathcal P),
$$
and choosing the initial value for $\mathcal P$ as $\mathcal P(0)=-H(q(0),p(0))$. We believe that the  time-renormalization functions proposed in the present work are suitable also for the application of symplectic schemes with constant step-size to the Hamiltonian system in the extended phase space. However, the construction of majorants for the expansion of the discrete solutions of the Hamiltonian time-renormalized equations in series of powers of the step-size requires a specific treatment that is outside the scope of the present work.

Generally speaking, a power series 
$f = \sum_{k\geq 0} f_k\, t^k$ with coefficients $f_k$ in $\R^n$ is said to be {\em majored} by  $\bar f = \sum_{k\geq 0} \bar f_k\, t^k$ with coefficients $\bar f_k$  in $\R_+$, 
if, for all $k \in \N$, 
$$
\|f_k\| \leq \bar f_k
$$
where $\|\cdot\|$ is the Euclidean norm in $\R^n$ and we then write 
$$
f \unlhd \bar f. 
$$
The application of the technique of majorant equations goes back to the proof of  the Cauchy-Kovalevskaya theorem~\cite{Car61, Pet50, vK75} (see also \cite{JvdH} for a specific application to ordinary differential equations) and has several advantages in our context:
\begin{itemize}
\item it allows for an easy estimate of the radius of convergence of the series $f$;
\item simple rules on majorant series apply to  the usual operations on power series, such as addition, multiplication, derivation and integration;
\item when used for the time-renormalized $N$-body equations, it leads to improved estimates of the value of $\beta$;
\item it can be used to analyze numerical discretizations of the time-renormalized equations, and in particular, to obtain bounds for their local errors.
\end{itemize}

We conclude this introductory section with the outline of the article. 
In Section \ref{sect:bpom} we will define and derive the main rules that apply to majorants.  We then construct majorants for the power expansions of the solutions of (\ref{eq:Nbody2}) in Section~\ref{sect:Nbpeipt}, and accordingly, majorants for the power expansions of the solutions of renormalized equations in Section \ref{sect:Nbptre}. In Section \ref{sect:DottrNbe} we give a majorant series for the implicit mid-point rule discretization (and more generally for arbitrary Runge-Kutta discretizations,) which leads to uniform bounds for the local errors. In Section \ref{sect:ne} we illustrate our results for the numerical simulation of a $15$-body  model of the Solar System.

\section{Basic properties of majorants} \label{sect:bpom}
We denote by  $\R^n[[t]]$  the set of {\em formal} power series in $t$ with coefficients in $\R^n$. 
%Given $f \in \R^n[[t]]$, the coefficients of $t^k$, $k \in \N$, is denoted $f_k \in \R^n$. %As in \cite{JvdH}, 
Given $f = \sum_{k\geq 0} f_k\, t^k \in \R^n[[t]]$,  
we denote  
$$
f' = \sum_{k\geq 0} (k+1)\, f_{k+1}\, t^{k} \in \R^n[[t]],
$$
and
 %in \cite{JvdH}
$$
\int f = \int_0^t \left( \sum_{k\geq 0} f_k\, s^k \right)\, ds =  \sum_{k\geq 1} \frac{1}{k} f_{k-1}\, t^k \in \R^n[[t]].
$$

\begin{definition} \label{defunlhd}
Given $f = \sum_{k\geq 0} f_k\, t^k \in \R^n[[t]]$  and $\bar f = \sum_{k\geq 0} \bar f_k\, t^k \in \R[[t]]$, we say that $f$ is {\em majored} by $\bar f$ and we write 
$$
f \unlhd \bar f 
$$
if, for all $k \in \N$, 
$$
\|f_k\| \leq \bar f_k
$$
where $\|\cdot\|$ is the Euclidean norm in $\R^n$. 
\end{definition}
\begin{remark}
  If $f \unlhd \bar f$, then the coefficients of the majorant series $\bar f$ are necessarily non-negative, that is,
   $\bar f \in \R_+[[t]]$, where $\R_+ = \{x \in \R\ : \ x\geq 0\}$.
\end{remark}

\begin{proposition} \label{prop:majo}
Let $f, g \in \R^n[[t]]$,  $h \in \R[[t]]$ and $\bar f, \bar g, \bar h \in \R_+[[t]]$. Then 
\begin{align}
& f \unlhd \bar f \mbox{ and } \bar f \unlhd \bar g \Longrightarrow f \unlhd  \bar g,
\label{eq:trans} \\
& f \unlhd \bar f \; \mbox{ and }  \; g \unlhd \bar g \Longrightarrow f+g \unlhd \bar f + \bar g, \label{eq:add} \\
& f \unlhd \bar f  \; \mbox{ and }  \; h \unlhd \bar h \Longrightarrow  f \; h  \unlhd \bar f \; \bar h, 
\label{eq:prod} \\
& f \unlhd \bar f  \; \mbox{ and }  \; g \unlhd \bar g \Longrightarrow \langle f,  g \rangle   \unlhd \bar f \; \bar g, \label{eq:scal} \\
& f \unlhd \bar f  \Longrightarrow \|f\|^2 \unlhd \bar f^2, \label{eq:norm} \\
& f \unlhd \bar f  \Longrightarrow f' \unlhd \bar f', \label{eq:deriv} \\
& f \unlhd \bar f  \Longrightarrow \int f \unlhd \int \bar f. \label{eq:int}
\end{align}
\end{proposition}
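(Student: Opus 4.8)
The plan is to verify each of the seven implications directly at the level of coefficients, using nothing beyond the triangle inequality in $\R^n$ and the Cauchy--Schwarz inequality. The transitivity property \eqref{eq:trans} is immediate, since $\|f_k\| \leq \bar f_k$ and $\bar f_k \leq \bar g_k$ give $\|f_k\| \leq \bar g_k$ for every $k$. For \eqref{eq:add} one uses $(f+g)_k = f_k + g_k$ and $\|f_k + g_k\| \leq \|f_k\| + \|g_k\| \leq \bar f_k + \bar g_k$. The differentiation and integration rules \eqref{eq:deriv} and \eqref{eq:int} are equally direct: by the conventions fixed just before Definition \ref{defunlhd}, the $k$-th coefficients of $f'$ and $\int f$ are $(k+1)\,f_{k+1}$ and (for $k\geq 1$) $\tfrac{1}{k} f_{k-1}$, with vanishing constant term in the latter case, so it suffices to carry the non-negative scalar factors $k+1$ and $\tfrac{1}{k}$ through the inequality $\|f_k\|\leq\bar f_k$.

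The products \eqref{eq:prod} and \eqref{eq:scal} require the only genuine estimation, and here I would write the Cauchy products out explicitly. For the scalar-times-vector product, the $k$-th coefficient of $f\,h$ is $\sum_{i+j=k} h_j\, f_i$, so
\[
\|(f\,h)_k\| = \Bigl\| \sum_{i+j=k} h_j\, f_i \Bigr\| \leq \sum_{i+j=k} |h_j|\,\|f_i\| \leq \sum_{i+j=k} \bar h_j\, \bar f_i = (\bar f\, \bar h)_k,
\]
using the triangle inequality together with the hypotheses $|h_j|\leq\bar h_j$, $\|f_i\|\leq\bar f_i$ and the non-negativity of the $\bar h_j$. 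For the inner product \eqref{eq:scal}, the $k$-th coefficient of $\langle f, g\rangle$ is $\sum_{i+j=k}\langle f_i, g_j\rangle$, and Cauchy--Schwarz in $\R^n$ then gives $|\langle f_i,g_j\rangle|\leq\|f_i\|\,\|g_j\|$, whence $|\langle f,g\rangle_k| \leq \sum_{i+j=k}\|f_i\|\,\|g_j\| \leq \sum_{i+j=k}\bar f_i\,\bar g_j = (\bar f\,\bar g)_k$. Finally, \eqref{eq:norm} need not be argued separately: it is the special case $g=f$, $\bar g=\bar f$ of \eqref{eq:scal}, since $\|f\|^2=\langle f,f\rangle$.

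I do not expect a real obstacle; the content is routine once the Cauchy products are written out. The one point worth a word of care is implicit in the formulation: in \eqref{eq:prod}, since $f\in\R^n[[t]]$ and $h\in\R[[t]]$, the product $f\,h$ is the scalar series $h$ acting on the vector series $f$, so it is the vector-valued triangle inequality that is invoked; and in \eqref{eq:scal} it is genuinely Cauchy--Schwarz in $\R^n$, not a trivial scalar bound, that controls each $|\langle f_i, g_j\rangle|$. With these remarks in place the proposition follows.
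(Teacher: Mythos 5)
Your proof is correct and follows essentially the same route as the paper: coefficientwise verification via the triangle inequality for \eqref{eq:trans}, \eqref{eq:add}, \eqref{eq:deriv}, \eqref{eq:int}, the Cauchy product plus triangle inequality for \eqref{eq:prod}, Cauchy--Schwarz on the Cauchy product for \eqref{eq:scal}, and \eqref{eq:norm} as the special case $g=f$ of \eqref{eq:scal}. Nothing further is needed.
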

\begin{proof}
The assertions (\ref{eq:trans}) and (\ref{eq:add}) are immediate consequences of Definition \ref{defunlhd}. 
The assertion (\ref{eq:prod}) follows from the Cauchy product of series: if $f=\sum_{l \geq 0} t^l f_l$ and $h=\sum_{l \geq 0} t^l h_l$, then 
$$
h f  = \sum_{l \geq 0} t^l  \sum_{k=0}^l h_k f_{l-k}
$$
so that 
$$
\|(h f)_l\| \leq   \sum_{k=0}^l \|h_k f_{l-k} \| \leq  \sum_{k=0}^l  |h_k| \|f_{l-k} \| \leq  \sum_{k=0}^l  \bar h_k \bar f_{l-k}  = (\bar h \bar f)_l.
$$
As for (\ref{eq:scal}), we  write 
$$
f = \sum_{k \geq 0} t^k f_k \quad \mbox{ and } \quad g = \sum_{l \geq 0} t^l g_l
$$
so that 
$$
\langle f, g\rangle = \sum_{l \geq 0} t^l  \sum_{k=0}^l  \langle f_k, g_{l-k} \rangle.
$$
Now, by Cauchy-Schwartz inequality, we have 
$$
|\langle f_k, g_{l-k} \rangle| \leq \|f_k\| \|g_{l-k} \| 
$$
so that  
$$
|(\langle f, g\rangle)_l | \leq   \sum_{k=0}^l |\langle f_k, g_{l-k} \rangle | \leq  \sum_{k=0}^l  \|f_k\| \|g_{l-k} \|  \leq  \sum_{k=0}^l  \bar f_k \bar g_{l-k}  = (\bar f \bar g)_l.
$$
Obviously, (\ref{eq:norm}) follows from (\ref{eq:scal}).

As for (\ref{eq:deriv}) and (\ref{eq:int}), we have 
$$
f'= \sum_{k \geq 0^*} t^{k-1} k f_k \quad \mbox{ and } \quad \int f = \sum_{k \geq 0} t^{k+1} \frac{1}{k+1} f_k
$$
so that 
$$
\forall k \geq 0, \quad |f'_k| = (k+1) |f_{k+1}| \leq (k+1) |\bar f_{k+1}| = |(\bar f)'_k|
$$
and 
$$
\forall k \geq 1, \quad  \left(\int f\right)_k = \frac{1}{k} |f_{k-1}| \leq  \frac{1}{k} |\bar f_{k-1}| =  \left(\int \bar f\right)_k.
$$
\end{proof}

\begin{proposition}
If $f=1 + \sum_{l \geq 1} t^l f_l \in \R[[t]]$, then, for any $\nu \in \R$, $p=(f)^\nu = 1 + \sum_{k \geq 1} t^k p_k \in \R[[t]]$, where
\begin{align} \label{eq:p}
\forall k\geq 1, \quad p_k = \frac{1}{k} \sum_{j=0}^{k-1} ((k-j)\, \nu - j) f_{k-j} \, p_j.
\end{align}
Morevover, for any $\nu <0$, we have 
\begin{align} \label{eq:ffb}
f \unlhd \bar f \Longrightarrow f^\nu \unlhd  (2-\bar f)^\nu.
\end{align}
%Equivalently, if $g=\sum_{l \geq 1} t^l f_l \in \R[[t]]$, then 
%\begin{align} \label{eq:ggb}
%g \unlhd \bar g \Longrightarrow (1+g)^\nu \unlhd  (1-\bar g)^\nu.
%\end{align}
\end{proposition}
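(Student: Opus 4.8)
The plan is to prove the two claims in order, with the recursion \eqref{eq:p} doing the bulk of the work for both; I write $p=f^\nu=1+\sum_{k\ge 1}t^k p_k$ throughout.

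\emph{The recursion \eqref{eq:p}.} Since $f_0=1$, the formal power series $p=f^\nu$ is well defined (for instance as $\exp(\nu\log f)$, with $\log f$ the usual series in $f-1$, which has zero constant term, so the composition is a legitimate element of $\R[[t]]$; equivalently one may use the binomial series $\sum_{k\ge0}\binom{\nu}{k}(f-1)^k$). The only property I need is the logarithmic‑derivative identity $f\,p'=\nu\,p\,f'$ in $\R[[t]]$, which is immediate from $p'=\nu f^{\nu-1}f'$. I would then extract the coefficient of $t^{k-1}$ on both sides: on the left the $i=0$ term of the Cauchy product contributes $f_0\,k\,p_k=k\,p_k$, and collecting the remaining terms and reindexing ($j=k-i$) produces exactly $k\,p_k=\sum_{j=0}^{k-1}\big((k-j)\nu-j\big)f_{k-j}\,p_j$. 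Together with $p_0=f(0)^\nu=1$ this is \eqref{eq:p}, and it also shows the recursion determines $p$ uniquely.

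\emph{The majorant bound \eqref{eq:ffb}.} Assume $f\unlhd\bar f$ (so in particular $\bar f_0\ge|f_0|=1$; as in all our applications $\bar f$ has the same normalization $\bar f=1+\sum_{l\ge1}\bar f_l t^l$, so $\bar f_0=1$). Set $g:=2-\bar f=1-\sum_{l\ge1}\bar f_l t^l$, which has constant term $1$, and $\bar p:=g^\nu=(2-\bar f)^\nu$. Applying the first part to $g$ gives $\bar p_0=1$ and $k\,\bar p_k=\sum_{j=0}^{k-1}\big((k-j)\nu-j\big)g_{k-j}\,\bar p_j$. The key observation is the sign bookkeeping: for $0\le j\le k-1$ we have $k-j\ge1$, hence $g_{k-j}=-\bar f_{k-j}\le0$, while $(k-j)\nu-j<0$ because $\nu<0$; therefore
\[
\big((k-j)\nu-j\big)g_{k-j}=\big|(k-j)\nu-j\big|\,\bar f_{k-j}\ \ge\ 0 .
\]
By induction on $k$ this already yields $\bar p_k\ge0$ for all $k$ (so $\bar p\in\R_+[[t]]$) and the clean formula $k\,\bar p_k=\sum_{j=0}^{k-1}\big|(k-j)\nu-j\big|\,\bar f_{k-j}\,\bar p_j$.

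\emph{Conclusion.} I would finish by proving $|p_k|\le\bar p_k$ for all $k$ by induction. For $k=0$ this is $|p_0|=1=\bar p_0$. For the inductive step, apply the triangle inequality to \eqref{eq:p} together with $|f_{k-j}|\le\bar f_{k-j}$ (from $f\unlhd\bar f$) and $|p_j|\le\bar p_j$ (induction hypothesis, with $\bar p_j\ge0$), all the factors $|(k-j)\nu-j|$ and $\bar f_{k-j}$ being non‑negative:
\[
k\,|p_k|\ \le\ \sum_{j=0}^{k-1}\big|(k-j)\nu-j\big|\,|f_{k-j}|\,|p_j|\ \le\ \sum_{j=0}^{k-1}\big|(k-j)\nu-j\big|\,\bar f_{k-j}\,\bar p_j\ =\ k\,\bar p_k ,
\]
hence $|p_k|\le\bar p_k$, i.e.\ $f^\nu\unlhd(2-\bar f)^\nu$. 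The main (indeed only) delicate point is the sign analysis in the middle step: it is precisely the hypothesis $\nu<0$ combined with the choice $2-\bar f$ — which flips the sign of every coefficient of index $\ge1$ while preserving the constant term $1$ — that makes $((k-j)\nu-j)g_{k-j}$ non‑negative, so that the recursion for $\bar p$ dominates term‑by‑term the absolute value of the recursion for $p$; without this the triangle inequality would be lossy and the induction would not close.
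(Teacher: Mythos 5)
Your proof is correct and follows essentially the same route as the paper: derive the recursion \eqref{eq:p} from the identity $f\,p'=\nu\,p\,f'$, apply it to $2-\bar f$ so that for $\nu<0$ the sign flip makes every term $\lvert(k-j)\nu-j\rvert\,\bar f_{k-j}\,\bar p_j$ non-negative, and close with the induction $|p_k|\le\bar p_k$. You merely spell out two points the paper leaves implicit (well-definedness of $f^\nu$ and the final induction step), which is fine.
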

\begin{proof}
We first observe that $p_0  = 1$ and the equation
$$
p'(t) f(t) = \nu f^\nu(t) f'(t),
$$
obtained by differentiation of $p=f^\nu$ and multiplication by $f$,
implies (\ref{eq:p}). This very same formula for $\bar p = (2-\bar f)^\nu$ now gives 
$$
\bar p_k = \frac{1}{k} \sum_{j=0}^{k-1} ((k-j)\, \nu - j) (-\bar f_{k-j}) \bar p_j = \frac{1}{k} \sum_{j=0}^{k-1} (j-(k-j)\, \nu) \bar f_{k-j} \bar p_j 
$$
and for $\nu  < 0$, it is clear that $\bar p_k \geq 0$ and furthermore, under the assumption $f \unlhd \bar f$, that
$$
|p_k| \leq \frac{1}{k} \sum_{j=0}^{k-1} |(k-j)\, \nu - j| | f_{k-j}| |p_j|  \leq  \frac{1}{k} \sum_{j=0}^{k-1} |(k-j)\, \nu - j| \; \bar f_{k-j} \; |p_j|
$$
which, by  an induction argument, implies relation (\ref{eq:ffb}).
%Relation (\ref{eq:ggb}) is obtained from (\ref{eq:ffb}) by writting $f=1+g$. 
\end{proof}

%\DeclareGraphicsRule{.tif}{png}{.png}{`convert #1 `dirname #1`/`basename #1 .tif`.png}

\section{$N$-body problem: Equations in physical time} \label{sect:Nbpeipt}

Let us consider the Newtonian $N$-body gravitational problem, 
\begin{equation}
\label{eq:Nbody}
\begin{split}
     \frac{d q_i}{d t} &= v_i, \\
  \frac{d v_i}{d t} &= g_i(q_1,\ldots,q_N),
\end{split}
\end{equation}
with
\begin{equation}
\label{eq:gi}
g_i(q_1,\ldots,q_N)=\sum_{\substack{j=1 \\ j\neq i}}^N \frac{G\, m_j}{||q_{i} - q_{j}||^3}(q_j-q_i)
\end{equation}
for $i=1,\ldots,N$ and where each $q_i\in \R^3$ represents the coordinates of the $i$-th body and $v_i \in \R^3$ its velocity.

Since the right-hand side of (\ref{eq:Nbody}) is smooth provided that $q_i \neq q_j$ for all  $1 \leq i < j \leq N$,  the equations (\ref{eq:Nbody}) supplemented with the initial conditions
\begin{equation}
\label{eq:icond}
q_i(0) = q_i^0, \quad v_i(0) = v_i^0, \quad i=1,\ldots,N,
\end{equation}
admit a unique formal solution as a series in powers of $t$ (that is, $q_i \in \R^3[[t]]$, $v_i \in \R^3[[t]]$, $i=1,\ldots,N$) for regular initial values, that is, provided that 
\begin{equation}
\label{eq:icondcond}
q_i^0 \neq q_j^0\quad \mbox{for all} \quad 1 \leq i < j \leq N.
\end{equation}

We denote $q = (q_1,\ldots,q_N)$ and $v = (v_1,\ldots,v_N)$. For later use, we also denote
\begin{equation}
\label{eq:munu}
\mu(q,v) = \max_{1 \leq i < j \leq N} \frac{\|v_i- v_j\|}{ \|q_i - q_j\|}, \quad
\nu(q) =  \max_{1 \leq i < j \leq N}  \frac{M_{ij}(q)}{ \|q_i- q_j\|},
\end{equation}
where for $1 \leq i < j \leq N$,
\begin{equation}
\label{eq:Ki}
K_i(q) = \sum_{\substack{j=1 \\ j\neq i}}^N  \frac{G\, m_j}{\|q_i - q_j\|^2},  \quad M_{ij}(q) = K_i(q)+K_{j}(q).
\end{equation}

\begin{lemma} \label{lem:phys}
Consider the power series expansion 
$$
(q,v) = (q_1,\ldots,q_N,v_1,\ldots,v_N) \in \R^{6N}[[t]]
$$
of the solution of (\ref{eq:Nbody})--(\ref{eq:icond}) with (\ref{eq:icondcond}).
If the power series
\begin{equation} 
\label{eq:rho1}
\rho = 1 + \sum_{k\geq 1} \rho_k\, t^k  \in  \R_+[[t]]
\end{equation}
is a majorant of $\frac{q_i-q_j}{\|q_i^0 - q_j^0\|}$ for all $i \neq j$, i.e.
\begin{equation}
\label{eq:qijtleq}
  \forall 1 \leq i < j \leq N, \quad q_i - q_j \tleq \|q_i^0 - q_j^0\|\, \rho,
\end{equation}
then,
\begin{equation*}
 \forall 1 \leq i \leq N, \quad g_i(q) \tleq
 K_i(q^0)
  \frac{\rho}{(2-\rho^2)^{3/2}}.
\end{equation*}
\end{lemma}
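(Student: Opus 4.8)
The plan is to bound each summand of $g_i$ separately and then assemble the result with the additivity rule~(\ref{eq:add}). Fix $i\neq j$, write $d=q_i-q_j\in\R^3[[t]]$ and $d^0=q_i^0-q_j^0$, and normalize by the \emph{scalar} $\|d^0\|\neq 0$ (nonzero by~(\ref{eq:icondcond})): set $w=d/\|d^0\|\in\R^3[[t]]$, so that hypothesis~(\ref{eq:qijtleq}) reads exactly $w\unlhd\rho$. The constant coefficient of $w$ is a unit vector of $\R^3$, hence $u:=\|w\|^2$ is a scalar series of the form $1+\sum_{k\geq1}u_k t^k$ and, in particular, invertible; consequently the $j$-th summand of $g_i$ can be written as
\begin{equation*}
\frac{G\,m_j}{\|q_i-q_j\|^3}(q_j-q_i)=-\frac{G\,m_j}{\|d^0\|^2}\,w\,u^{-3/2}.
\end{equation*}

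I would then chain the rules of Proposition~\ref{prop:majo} and the power rule~(\ref{eq:ffb}). From $w\unlhd\rho$ and~(\ref{eq:norm}) we get $u\unlhd\rho^2$, and $\rho^2=1+\sum_{k\geq1}(\rho^2)_k t^k\in\R_+[[t]]$ since $\rho\in\R_+[[t]]$. Applying~(\ref{eq:ffb}) with $\nu=-3/2<0$ (both $u$ and $2-\rho^2$ having constant coefficient $1$, so that the fractional powers are legitimate formal series by the preceding proposition) yields $u^{-3/2}\unlhd(2-\rho^2)^{-3/2}$. Multiplying the vector series $w$ by the scalar series $u^{-3/2}$ and using~(\ref{eq:prod}), together with the elementary observations that $f$ and $-f$ share the same majorants and that multiplication by the non-negative constant $\|d^0\|^{-2}$ preserves $\unlhd$, gives
\begin{equation*}
\frac{G\,m_j}{\|q_i-q_j\|^3}(q_j-q_i)\unlhd\frac{G\,m_j}{\|q_i^0-q_j^0\|^2}\,\frac{\rho}{(2-\rho^2)^{3/2}}.
\end{equation*}
It then remains to sum over $j\neq i$ via~(\ref{eq:add}) and to recognize, from~(\ref{eq:Ki}), that $\sum_{j\neq i}G\,m_j/\|q_i^0-q_j^0\|^2=K_i(q^0)$, which is precisely the claimed bound $g_i(q)\unlhd K_i(q^0)\,\rho/(2-\rho^2)^{3/2}$.

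The proof is essentially bookkeeping; the one point deserving care is the normalization step. One must divide $q_i-q_j$ by the \emph{number} $\|q_i^0-q_j^0\|$ rather than by the \emph{series} $\|q_i-q_j\|$, so that the base $u=\|q_i-q_j\|^2/\|q_i^0-q_j^0\|^2$ of the $(-3/2)$-th power has constant coefficient exactly $1$; this is the setting in which~(\ref{eq:p}) and~(\ref{eq:ffb}) were derived, and it is also what makes $u^{-3/2}$ well defined. Beyond arranging this factorization and checking that each operation used (norm-square, negative power, scalar multiplication, sum) is covered by Proposition~\ref{prop:majo} or~(\ref{eq:ffb}), I expect no genuine obstacle.
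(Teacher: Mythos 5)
Your proof is correct and follows essentially the same route as the paper: apply (\ref{eq:norm}) to get the squared-distance majorant, invoke (\ref{eq:ffb}) with $\nu=-3/2$, combine with (\ref{eq:qijtleq}) via (\ref{eq:prod}), and sum over $j\neq i$ with (\ref{eq:add}) to recognize $K_i(q^0)$. Your explicit normalization by the scalar $\|q_i^0-q_j^0\|$ so that the base of the fractional power has constant coefficient $1$ is exactly what the paper does implicitly, just stated more carefully.
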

\begin{proof}
Using (\ref{eq:qijtleq}) in combination with (\ref{eq:norm})  and $\bar f = \|q_i^0-q_j^0\| \rho$ implies 
$$
\forall (i,j) \in \{1, \ldots,N\}^2, \quad \|q_i-q_j\|^2  \unlhd \|q_i^0 - q_j^0\|^2 \rho^2,
$$
i.e. 
$$
\forall i \neq j, \quad \frac{\|q_i-q_j\|^2 }{\|q_i^0-q_j^0\|^2}   \unlhd \rho^2.
$$
Upon using (\ref{eq:ffb}) with $\nu=-3/2$ we obtain 
$$
\forall i \neq j, \quad \frac{\|q_i^0-q_j^0\|^3}{\|q_i-q_j\|^{3}}   \unlhd (2-\rho^2)^{-3/2},
$$
which, in combination with  (\ref{eq:qijtleq}) and using (\ref{eq:prod}), leads to 
$$
\forall i \neq j, \quad \frac{q_j-q_i}{\|q_i-q_j\|^{3}}    \unlhd \frac{1}{\|q_j^0-q_i^0\|^2} \frac{\rho}{(2-\rho^2)^{3/2}}
$$
and finally to the result by multiplying both sides by $h=\bar h = G m_j$ (using again (\ref{eq:prod})) and then summing over all $j \neq i$ (using  (\ref{eq:add})) . 
\end{proof}

\begin{lemma}
\label{lem:Phi}
Let $\mathcal{A}$ be the set of power series of the form (\ref{eq:rho1}) satisfying (\ref{eq:qijtleq}), and consider the operator
\begin{equation}
\label{eq:Phi}
\begin{array}{rcl}
\Phi: \mathcal{A} &\rightarrow&  \R[[t]] \\
\rho &\mapsto& 1 + \mu(q^0,v^0)\,  t + \nu(q^0) \, \int \int   \frac{\rho}{(2-\rho^2)^{3/2}},
\end{array}
\end{equation}
where $\int \int$ denotes applying the operator $\int$  twice.
Then $\Phi(\mathcal{A}) \subset \mathcal{A}$.
\end{lemma}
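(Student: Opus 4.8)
The plan is to show that if $\rho \in \mathcal{A}$, i.e. $\rho$ has the form \eqref{eq:rho1} and satisfies \eqref{eq:qijtleq}, then $\Phi(\rho)$ again has this form and again majorizes each $(q_i-q_j)/\|q_i^0-q_j^0\|$. The first half is essentially immediate: by Lemma~\ref{lem:phys} the series $\frac{\rho}{(2-\rho^2)^{3/2}}$ lies in $\R_+[[t]]$ (it has nonnegative coefficients and constant term $1$), and applying $\int\int$ to it produces a series in $\R_+[[t]]$ with zero constant and linear terms; adding $1 + \mu(q^0,v^0)\,t$ then yields a series of the form $1 + \sum_{k\geq1}(\Phi\rho)_k t^k$ with nonnegative coefficients, since $\mu(q^0,v^0)\geq 0$ and $\nu(q^0)\geq 0$. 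So $\Phi(\rho)$ has the shape \eqref{eq:rho1}.

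The substantive part is verifying \eqref{eq:qijtleq} for $\Phi(\rho)$, that is $q_i-q_j \tleq \|q_i^0-q_j^0\|\,\Phi(\rho)$ for all $i<j$. Here I would use the integral form of the equations of motion: from \eqref{eq:Nbody} and \eqref{eq:icond},
\begin{equation*}
q_i(t) - q_j(t) = (q_i^0 - q_j^0) + t\,(v_i^0 - v_j^0) + \int\int \bigl(g_i(q) - g_j(q)\bigr).
\end{equation*}
Now estimate term by term with the majorant rules of Proposition~\ref{prop:majo}. The constant vector $q_i^0-q_j^0$ is majorized by $\|q_i^0-q_j^0\|$; the linear term $t(v_i^0-v_j^0)$ is majorized by $\|v_i^0-v_j^0\|\,t \leq \mu(q^0,v^0)\|q_i^0-q_j^0\|\,t$ by the definition \eqref{eq:munu} of $\mu$. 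For the double integral, Lemma~\ref{lem:phys} gives $g_i(q)\tleq K_i(q^0)\,\frac{\rho}{(2-\rho^2)^{3/2}}$ and likewise for $g_j$, so by \eqref{eq:add}
\begin{equation*}
g_i(q) - g_j(q) \tleq \bigl(K_i(q^0)+K_j(q^0)\bigr)\frac{\rho}{(2-\rho^2)^{3/2}} = M_{ij}(q^0)\,\frac{\rho}{(2-\rho^2)^{3/2}} \leq \nu(q^0)\,\|q_i^0-q_j^0\|\,\frac{\rho}{(2-\rho^2)^{3/2}},
\end{equation*}
using the definition \eqref{eq:munu}--\eqref{eq:Ki} of $\nu$. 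Applying \eqref{eq:int} twice and then \eqref{eq:add} to sum the three contributions gives exactly $q_i-q_j \tleq \|q_i^0-q_j^0\|\,\Phi(\rho)$.

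The one point requiring a little care — and the only place where anything could go wrong — is the sign handling in the $\tleq$ bound for the affine part: the relation $a \tleq \bar a$ requires the majorant to have \emph{nonnegative} coefficients, so one must write $q_i^0-q_j^0 \tleq \|q_i^0-q_j^0\|$ (a constant series) and $t(v_i^0-v_j^0)\tleq \mu(q^0,v^0)\|q_i^0-q_j^0\|\,t$ rather than trying to bound by something with the ``wrong'' sign, and then invoke transitivity \eqref{eq:trans} with the monotone replacement $\|v_i^0-v_j^0\|\leq \mu(q^0,v^0)\|q_i^0-q_j^0\|$ and $M_{ij}(q^0)\leq \nu(q^0)\|q_i^0-q_j^0\|$. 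Once this bookkeeping is set up, the three majorant estimates combine by \eqref{eq:add} and the result follows. I do not expect any genuine obstacle here; the lemma is a clean assembly of Lemma~\ref{lem:phys} and the calculus rules \eqref{eq:add}, \eqref{eq:int}, \eqref{eq:trans}.
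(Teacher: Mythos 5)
Your proposal is correct and follows essentially the same route as the paper: write $q_i-q_j$ in integral form, majorize $g_i(q)-g_j(q)$ via Lemma~\ref{lem:phys} and the rules of Proposition~\ref{prop:majo}, then absorb $\|v_i^0-v_j^0\|$ and $M_{ij}(q^0)$ into $\mu(q^0,v^0)\,\|q_i^0-q_j^0\|$ and $\nu(q^0)\,\|q_i^0-q_j^0\|$ to recover $\|q_i^0-q_j^0\|\,\Phi(\rho)$. Your additional remark that $\Phi(\rho)$ has the form \eqref{eq:rho1} (nonnegative coefficients, constant term $1$) is a harmless extra check that the paper leaves implicit.
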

\begin{proof}
Owing to Lemma \ref{lem:phys}, we then have 
$$
 \forall 1 \leq i \leq N, \quad g_i(q) \tleq
 K_i(q^0)
  \frac{\rho}{(2-\rho^2)^{3/2}},
$$
so that, by virtue of inequality (\ref{eq:int})
$$
 \forall 1 \leq i \leq N, \quad  \int (g_i(q)-g_j(q)) \unlhd
M_{ij}(q^0) 
  \int \frac{\rho}{(2-\rho^2)^{3/2}}.
$$
and 
$$
 \forall 1 \leq i \leq N, \quad  \int \int (g_i(q)-g_j(q)) \unlhd
M_{ij}(q^0) 
 \int  \int \frac{\rho}{(2-\rho^2)^{3/2}}.
$$
Now, from the integral form of the second equation of  (\ref{eq:Nbody})  we have 
$$
v_i-v_j = v_i^0 - v_j^0 + \int \left(g_i(q)-g_j(q) \right)
$$
which translates into 
$$
v_i - v_j \unlhd  \|v_i^0 - v_j^0\| + M_{ij}(q^0)
  \int  \frac{\rho}{(2-\rho^2)^{3/2}}.
$$
Similarly, the integral form of the first equation of  (\ref{eq:Nbody})  leads to 
$$
q_i-q_j = q_i^0-q_j^0 + t\, (v_i^0-v_j^0) + \int \int \left(g_i(q)-g_j(q) \right),
$$
which in turn implies that, for $1 \leq i < j \leq N$,
\begin{align*}
  q_i - q_j &\tleq \|q_i^0 - q_j^0\| + \|v_i^0 - v_j^0\| t +  M_{ij}(q^0) \, \int \int   \frac{\rho}{(2-\rho^2)^{3/2}} \\
  &\tleq \|q_i^0 - q_j^0\| \left(1 + \mu(q^0,v^0)\,  t + \nu(q^0) \, \int \int   \frac{\rho}{(2-\rho^2)^{3/2}}\right),
\end{align*}
that is, $\rho \in \mathcal{A}$.
\end{proof}
\begin{theorem} \label{th:phys} 
The power series expansion
$$
(q,v) = (q_1,\ldots,q_N,v_1,\ldots,v_N) \in \R^{6N}[[t]]
$$
of the solution  (\ref{eq:Nbody})--(\ref{eq:icond}) with (\ref{eq:icondcond}), satisfies (\ref{eq:qijtleq}), where  $\rho  \in  \R_+[[t]]$ is the unique  power series solution of the following initial value problem
\begin{equation} 
\label{eq:rho2}
\rho'' = \nu(q^0) \,  \frac{\rho}{(2-\rho^2)^{3/2}}, \quad
\rho_0 = 1, \quad
\rho'_0 =  \mu(q^0,v^0).
\end{equation}
Furthermore, for each $i=1,2,\ldots,N$,
\begin{equation}
\label{eq:majorant_qi}
q_i - q_i^0 - t\, v_i^0  \unlhd \min_{1\leq j \leq N} \|q_i^0 - q_j^0\|\, \sum_{k=2}^{\infty} \rho_k\, t^k.
\end{equation}
\end{theorem}
\begin{proof}
Consider
$$
\rho^{[0]} = 1 + \sum_{k\geq 1} \rho^{[0]} _k\, t^k  \in  \R_+[[t]]
$$ 
such that, for all $k \geq 1$, 
$$
\rho^{[0]} _k = \max_{i \neq j} \frac{\|(q_i-q_j)_k\|}{\|q_i^0-q_j^0\|}.
$$
Here, $(q_i-q_j)_k \in \R^3$ denotes the coefficient for $t^k$ of $(q_i-q_j) \in \R^3[[t]]$, that is to say 
$$
\frac{1}{k!} \left. \frac{d^k}{dt^k} (q_i(t) -q_j(t)) \right|_{t=0}.
$$

Lemma~\ref{lem:Phi} implies that
\begin{align} \label{eq:majiter}
  \forall m \geq 1, \quad \forall 1\leq i<j\leq N, \quad q_i - q_j &\tleq \|q_i^0 - q_j^0\| \, \rho^{[m]} 
\end{align}
where $\rho^{[m]}  = \Phi(\rho^{[m-1]} )$ for $m\geq 1$.
Clearly, the operator $\Phi$ satisfies the following property~\footnote{Such an operator is called a {\em Noetherian} operator in~\cite{JvdH}.}:  for each $k\geq 1$, $(\Phi(\rho))_k$ is  a  polynomial  of the coefficients $\rho_l$ for $l \leq k-2$.

The sequence $\{\rho^{[m]} \}_{m \in \N}$ converges towards a limit
$$
\rho^{[\infty]}  = 1 + \sum_{k\geq 1} \rho^{[\infty]}_k\, t^k  \in  \R_+[[t]]
$$ 
 in the sense that for each index $k \geq 0$, the sequence $\{\rho^{[m]}_k\}_{m\in \N}$ is ultimately constant, i.e., there exists $m_k\geq 1$ such that 
$\rho^{[m]}_k=\rho^{[\infty]} _k$ for all $m\geq m_k$. Indeed,  assume that this is not the case. Let $k$ be the smallest index $l \geq 2$ such that the sequence $\{\rho^{[m]}_l\}_{m\in \N}$  is not ultimately constant.  Since  $\rho^{[m]}_k=(\Phi(\rho^{[m-1]}))_k$ is  a  polynomial  of the coefficients $\rho^{[m-1]}_l$ for $l \leq k-2$ and the sequences $\{\rho^{[m]}_l\}_{m\in \N}$ (for each for $l \leq k-2$) are ultimately constant, we get a contradiction.
 This limit is the unique solution of the fixed point equation 
$$
\rho^{[\infty]}= \Phi(\rho^{[\infty]}) 
$$
and it is thus clear from (\ref{eq:majiter}) that estimate (\ref{eq:qijtleq}) holds for $\rho=\rho^{[\infty]}$, the solution of
\begin{equation}
\label{eq:rho}
\rho = 1+ \mu(q^0,v^0)\,  t + \nu(q^0) \, \int \int   \frac{\rho}{(2-\rho^2)^{3/2}},
\end{equation}
or in other words, the unique power series solution
% $\rho \in \R_{+}[[t]]$
of (\ref{eq:rho2}). 

Finally, (\ref{eq:majorant_qi}) follows from applying Lemma~\ref{lem:phys} to
$$
q_i = q_i^0+ t\, v_i^0 + \int \int g_i(q)
$$
and taking into account (\ref{eq:rho}), which leads to
\begin{equation*}
q_i - q_i^0 - t\, v_i^0 \unlhd \frac{K_i(q^0)}{\nu(q^0)}\, \sum_{k=2}^{\infty} \rho_k\, t^k
 \unlhd \min_{1\leq j \leq N} \|q_i^0 - q_j^0\|\, \sum_{k=2}^{\infty} \rho_k\, t^k.
\end{equation*}
\end{proof}

Clearly, the solution $\rho(t)$ of (\ref{eq:rho2}) is $\rho(t) = 1 + \lambda(t\, \sqrt{\mu_0^2 + \nu_0})$ where $\lambda(t)$ is the solution of 
\begin{equation}
\label{eq:lambdaode2}
\lambda'' =  (1-\eta_0) \frac{1 + \lambda}{(1 - 2 \lambda - \lambda^2)^{3/2}}, \quad \lambda(0)=0, \quad \lambda'(0)=\sqrt{\eta_0},
\end{equation}
with
\begin{equation*}
\eta_0 = \frac{\mu_0^2}{\mu_0^2 + \nu_0},\quad
\mu_0 = \max_{1 \leq i < j \leq N} \frac{\|v^0_i- v^0_j\|}{ \|q^0_i - q^0_j\|}, \quad 
\nu_0  =  \max_{1 \leq i < j \leq N}  \frac{M_{ij}(q^0)}{ \|q^0_i- q^0_j\|}.
\end{equation*}
Since $\mu_0\geq 0$ and $\nu_0>0$, we have that $0 \leq \eta_0 <1$.  The right-hand side of that second order differential equation being analytic at $\lambda=0$,  the power series expansion of the solution
 $\lambda(t)$ of (\ref{eq:lambdaode2}) is  convergent  for small enough $t \in \R$.
  
Next, we give a formula for the radius  of convergence $r(\eta_0)$ of the power series expansion of the solution $\lambda(t)$ of (\ref{eq:lambdaode2}) for $\eta_0 \in (0,1)$. 

\begin{proposition}\label{prop:r}
For $\eta_0 \in (0,1)$, let $f$ be the function 
\begin{equation}
\label{eq:f(rho)}
f(\lambda) = \left( \eta_0 +2 \, (1-\eta_0) \, \left((1-2 \lambda-\lambda^2)^{-1/2}-1\right) \right)^{-1/2}.
\end{equation}
The differential equation (\ref{eq:lambdaode2}) has an analytic solution $t \mapsto \lambda(t)$ defined on the disk $D_{r(\eta_0)}(0)$ with 
\begin{equation}
\label{eq:r(eta)}
r(\eta_0)  = \int_0^{\sqrt{2}-1} 
f(\sigma) d\sigma.
\end{equation}
Function (\ref{eq:r(eta)}) is represented on Figure \ref{fig:radius}. 
\end{proposition}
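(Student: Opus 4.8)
\emph{The plan.} Equation~(\ref{eq:lambdaode2}) is autonomous, so it carries an energy first integral; exhibiting it reduces the problem to a quadrature whose right end-point is exactly $r(\eta_0)$, and a separate control of the Taylor coefficients of $\lambda$ at the origin (which turn out to be nonnegative) then pins down the radius of convergence via Pringsheim's theorem.

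\emph{Step 1: first integral and quadrature.} I would first multiply (\ref{eq:lambdaode2}) by $\lambda'$ and observe that, since $\frac{d}{dt}(1-2\lambda-\lambda^2)^{-1/2} = (1+\lambda)\,\lambda'\,(1-2\lambda-\lambda^2)^{-3/2}$, the equation becomes $\frac{d}{dt}\bigl[\frac12(\lambda')^2 - (1-\eta_0)(1-2\lambda-\lambda^2)^{-1/2}\bigr] = 0$. Evaluating at $t=0$ with $\lambda(0)=0$, $\lambda'(0)=\sqrt{\eta_0}$ gives
$$
(\lambda'(t))^2 \;=\; \eta_0 + 2\,(1-\eta_0)\bigl((1-2\lambda-\lambda^2)^{-1/2}-1\bigr) \;=\; f(\lambda(t))^{-2},
$$
with $f$ as in~(\ref{eq:f(rho)}). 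On $[0,\sqrt2-1)$ the middle expression is $\geq \eta_0 > 0$, so $\lambda'(t)=f(\lambda(t))^{-1}>0$ and, separating variables, $t = \int_0^{\lambda(t)} f(\sigma)\,d\sigma =: G(\lambda(t))$. Since $1-2\lambda-\lambda^2 = \bigl((\sqrt2-1)-\lambda\bigr)\bigl((\sqrt2+1)+\lambda\bigr)$ is positive on $[0,\sqrt2-1)$, the principal branch of $f$ is analytic and nonvanishing on some simply connected complex neighbourhood $U$ of $[0,\sqrt2-1)$; near $\sigma=\sqrt2-1$ one has $f(\sigma)=O\bigl(((\sqrt2-1)-\sigma)^{1/4}\bigr)$, so $f$ is bounded there and $r(\eta_0)=\int_0^{\sqrt2-1}f(\sigma)\,d\sigma<\infty$. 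Hence $G$ is analytic on $U$ and restricts to a strictly increasing real-analytic bijection of $[0,\sqrt2-1)$ onto $[0,r(\eta_0))$ with nonvanishing derivative; its inverse is real-analytic on $[0,r(\eta_0))$ and extends analytically to a complex neighbourhood of that segment, yielding a function $\lambda$ that solves (\ref{eq:lambdaode2}) with the prescribed initial data (differentiating $t=G(\lambda(t))$ recovers both). Finally, as $t\uparrow r(\eta_0)$ we have $\lambda(t)\to\sqrt2-1$, hence $\lambda'(t)\to+\infty$, so $t=r(\eta_0)$ is a singular point of $\lambda$.

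\emph{Step 2: nonnegative coefficients and Pringsheim.} It then remains to exclude singularities of $\lambda$ strictly inside $D_{r(\eta_0)}(0)$. Writing the right-hand side of (\ref{eq:lambdaode2}) as $\Psi(\lambda)=(1-\eta_0)(1+\lambda)\bigl(1-(2\lambda+\lambda^2)\bigr)^{-3/2}$, I would expand $(1-u)^{-3/2}=\sum_{k\geq 0}c_k u^k$ with $c_k>0$ and substitute $u=2\lambda+\lambda^2$, a polynomial in $\lambda$ with nonnegative coefficients and zero constant term: thus $\Psi$ is a power series in $\lambda$ with nonnegative coefficients and $\Psi(0)=1-\eta_0>0$. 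Since the formal solution has $\lambda_0=0$, $\lambda_1=\sqrt{\eta_0}\geq 0$, and $(k+2)(k+1)\lambda_{k+2}$ equals the $k$-th Taylor coefficient of $\Psi(\lambda(t))$, an immediate induction gives $\lambda_k\geq 0$ for all $k$ (here $\lambda_0=0$ ensures each $\lambda(t)^j$, $j\geq1$, contributes only in degrees $\geq j$). By Pringsheim's theorem the radius of convergence $R$ of $\sum_k \lambda_k t^k$ is a singular point of $\lambda$; since $\lambda$ is analytic on a neighbourhood of $[0,r(\eta_0))$ this forces $R\geq r(\eta_0)$, while the singularity at $t=r(\eta_0)$ forces $R\leq r(\eta_0)$. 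Hence $R=r(\eta_0)$, so $\lambda$ is analytic on $D_{r(\eta_0)}(0)$ and solves (\ref{eq:lambdaode2}) there.

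\emph{Main obstacle.} The energy integral and the sign of the binomial coefficients are routine; the load-bearing step is the last one. The quadrature plus inverse-function argument only yield analyticity of $\lambda$ in a neighbourhood of the \emph{real} segment $[0,r(\eta_0))$, and a priori $\lambda$ could have a nearer singularity off the real axis — for instance where the bracket defining $f(\lambda)^{-2}$ vanishes, or where the continued solution would reach $\lambda=-\sqrt2-1$. It is precisely the nonnegativity of the Taylor coefficients, via Pringsheim's theorem, that rules this out and delivers analyticity on the full disk.
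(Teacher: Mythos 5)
Your proof is correct and follows essentially the same route as the paper: both rest on the energy first integral $f(\lambda)\,\lambda'=1$ (equivalently $t=\int_0^{\lambda(t)}f(\sigma)\,d\sigma$) combined with the nonnegativity of the Taylor coefficients of $\lambda$, which pins the dominant singularity to the positive real axis at $\lambda=\sqrt{2}-1$. The only real difference is presentational: you build the solution by inverting the quadrature and invoke Pringsheim's theorem, whereas the paper works with the series solution directly and proves the needed continuation statement in place (showing $\lambda(R)=\sqrt{2}-1$ by extending $\lambda$ across the circle $|t|=R$) before letting $\rho\to R$ in $F(\lambda(\rho))=\rho$.
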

\begin{proof}
The right-hand side of (\ref{eq:lambdaode2}) being holomorphic in $\lambda=0$, it has an holomorphic solution $t \mapsto \lambda(t)$ in a neighborhood  of the origin $0 \in \C$. Let $R>0$ be the radius of convergence of the series expansion
\begin{equation}
  \label{eq:lambda(t)}
  \lambda(t) = \sum_{k=1}^\infty \lambda_k\, t^k
\end{equation}
of $\lambda(t)$.
Clearly, $\lambda(\rho)<\sqrt{2}-1$ for $0<\rho<R$; otherwise, it should exist $0<\rho<R$ such that $\lambda(\rho)=\sqrt{2}-1$, which is incompatible with  $\lambda''(\rho) = h(\sqrt{2}-1)$.
Given that the function 
$$
h:  \sigma \mapsto \frac{1 + \sigma}{(1 - 2 \sigma- \sigma^2)^{3/2}}
$$
has a series expansion around $\sigma =0$ with real positive coefficients, we have that
\begin{equation}
  \label{eq:poslambda}
  \forall k\geq 1, \quad \lambda_k\geq 0.
\end{equation}
Hence, 
\begin{equation*}
  \forall n\geq 1, \quad   \lambda^{[n]}(t) := \sum_{k=1}^n \lambda_k\, \rho^k < \sqrt{2}-1
  \end{equation*}
  provided that  $ 0 \leq \rho<R$, and thus $\lambda^{[n]}(R)\leq \sqrt{2}-1$, which implies, thanks to (\ref{eq:poslambda}) that (\ref{eq:lambda(t)}) is convergent for $t=R$ and that $\lambda(R)<\sqrt{2}-1$. In turn, this implies that (\ref{eq:lambda(t)}) is also convergent for
all  $t^* \in \mathcal{B}(R) := \{t \in \C\ : \ |t|=R\}$ and that  
$$
 |\lambda(t^*)| \leq \lambda(R) \leq \sqrt{2}-1.
$$
Actually, $\lambda(R)=\sqrt{2}-1$. Indeed, if  $\lambda(R)<\sqrt{2}-1$, then $h(\sigma)$ is holomorphic for all $\sigma=\lambda(t^*)$,   $t^* \in \mathcal{B}(R)$. Consequently, $\lambda(t)$ is holormorphic in  $\mathcal{B}(R)$ and hence in the closed disk $ \{t \in \C\ : \ |t|\leq R\}$, which contradicts the assumption that $R$ is the radius of convergence of (\ref{eq:lambda(t)}) (i.e., the distance to the nearest singularity of $\lambda(t)$).

If $\eta_0>0$,  multiplying both sides of equation (\ref{eq:lambdaode2}) by $2 \lambda'$ and applying the operator $\int$ on both sides we obtain the first-order differential equation 
\begin{align} \label{eq:lambdafirst}
 f(\lambda) \, \lambda' =1, \quad \lambda(0)=0,
\end{align}
so that, for $0\leq\rho<R$, $F(\lambda(\rho))=\rho$, where
$$
F(\lambda) = \int_0^\lambda f(\sigma) d\sigma.
$$
By continuity of $F$, we finally have that $R=F(\lambda(R))=F(\sqrt{2}-1)$.
\end{proof}

\begin{figure}[t]
\begin{center}
\resizebox{25em}{!}{\includegraphics{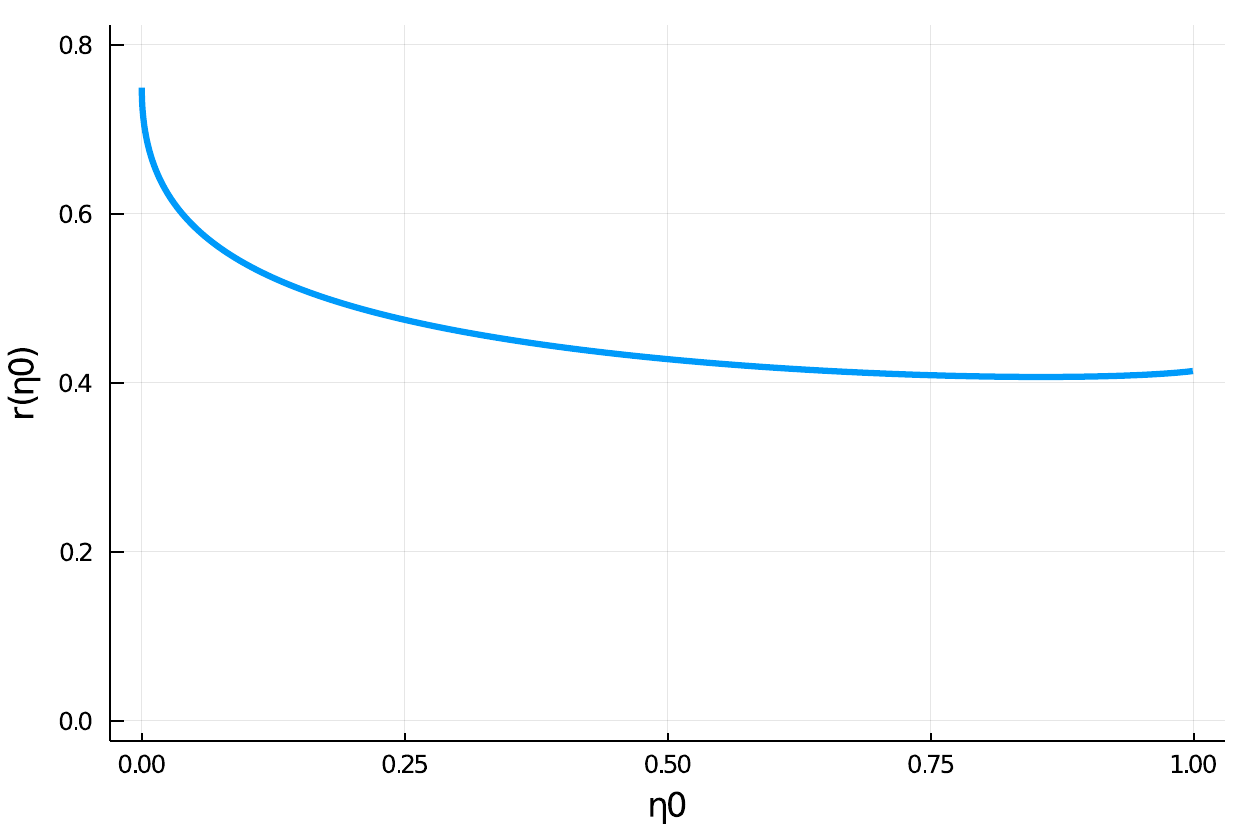}}
\end{center}
\caption{Radius of convergence  $r(\eta_0)$  as a function of $\eta_0\in [0,1)$}
\label{fig:radius}
\end{figure}  
In view of Theorem~\ref{th:phys}, we conclude that the power series expansion
$$
(q,v) = (q_1,\ldots,q_N,v_1,\ldots,v_N) \in \R^{6N}[[t]]
$$
of the solution of  (\ref{eq:Nbody})--(\ref{eq:icond}) with (\ref{eq:icondcond}) is convergent for all $t \in \R$ such that 
$$
|t| < \frac{r(\eta_0)}{\sqrt{\mu_0^2 + \nu_0}},
$$
where 
\begin{equation*}
\eta_0 := \frac{\mu_0^2}{\mu_0^2 + \nu_0} \in (0,1].
\end{equation*}

Hence, we get as a corollary of Theorem~\ref{th:phys} the following result:
\begin{corollary}
\label{cor:1}
The solution of  (\ref{eq:Nbody})--(\ref{eq:icond}) with (\ref{eq:icondcond}) admits an holomorphic extension as a function of the complex time $t$ in the disk 
$$
\left\{ t \in \C\ : \ |t| <  \frac{r(\eta_0)}{\sqrt{\mu_0^2 + \nu_0}}\right\}.
$$\end{corollary}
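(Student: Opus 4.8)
The plan is to obtain the corollary as an essentially formal consequence of Theorem~\ref{th:phys} and Proposition~\ref{prop:r}, the only analytic input being the elementary principle that a formal power series which is majored (coefficientwise in norm) by a real power series of radius of convergence $R$ converges absolutely, hence defines a holomorphic function, on the complex disk $\{|t|<R\}$. Concretely: first pin down the radius of convergence of the majorant $\rho$; then transfer it to each coordinate $q_i,v_i$ via the majorant estimates already proved; and finally check that the resulting holomorphic map is genuinely the holomorphic continuation of the real maximal solution.

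First I would record the radius of convergence of $\rho$. By Theorem~\ref{th:phys}, $\rho$ is the power series solution of \eqref{eq:rho}, and the substitution $\rho(t)=1+\lambda\bigl(t\sqrt{\mu_0^2+\nu_0}\bigr)$ turns \eqref{eq:rho} into \eqref{eq:lambdaode2}. Since the masses are positive we have $\nu_0>0$, hence $\eta_0<1$; in the generic case $\eta_0\in(0,1)$ Proposition~\ref{prop:r} gives that $\lambda$ is analytic on $D_{r(\eta_0)}(0)$, while the degenerate case $\eta_0=0$ (all relative initial velocities vanish) is covered by the same argument because the right-hand side of \eqref{eq:lambdaode2} is still holomorphic at $\lambda=0$ with nonnegative Taylor coefficients. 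Writing $\varrho:=r(\eta_0)/\sqrt{\mu_0^2+\nu_0}$, it follows that the power series $\rho(t)=\sum_{k\ge0}\rho_k t^k$, and therefore also $\rho'$, has radius of convergence at least $\varrho$, and for every real $\sigma\in[0,\varrho)$ one has $\sum_{k\ge1}\rho_k\sigma^k=\lambda(\sigma\sqrt{\mu_0^2+\nu_0})<\sqrt2-1$ by the bound on $\lambda$ obtained inside the proof of Proposition~\ref{prop:r}.

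Next I would transfer this to $(q,v)$. Estimate \eqref{eq:majorant_qi} gives $q_i-q_i^0-t\,v_i^0\unlhd \min_{j}\|q_i^0-q_j^0\|\sum_{k\ge2}\rho_k t^k$ for each $i$; applying Lemma~\ref{lem:phys} to $v_i=v_i^0+\int g_i(q)$ and using \eqref{eq:rho_op} (which yields $\int\frac{\rho}{(2-\rho^2)^{3/2}}=\nu_0^{-1}(\rho'-\mu_0)$ after one differentiation) produces the analogous bound $v_i-v_i^0\unlhd \min_{j}\|q_i^0-q_j^0\|\,(\rho'-\mu_0)$. By Definition~\ref{defunlhd}, for complex $t$ with $|t|<\varrho$ the numerical series $\sum_k\|(q_i)_k\|\,|t|^k$ and $\sum_k\|(v_i)_k\|\,|t|^k$ are then dominated by convergent series, so the $6N$ coordinate series $q_i(t),v_i(t)$ converge absolutely on $\{|t|<\varrho\}$ and define holomorphic functions there. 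Since on the real interval $(-\varrho,\varrho)$ these functions coincide with the maximal real solution (same power series), it only remains to verify that they solve the complexified system \eqref{eq:Nbody} on the whole complex disk, for which the sole point is that $q_i(t)\ne q_j(t)$ so that the right-hand side \eqref{eq:gi} stays holomorphic; this follows from $\|(q_i-q_j)(t)-(q_i^0-q_j^0)\|\le\|q_i^0-q_j^0\|\sum_{k\ge1}\rho_k|t|^k=\|q_i^0-q_j^0\|\,\lambda(|t|\sqrt{\mu_0^2+\nu_0})<(\sqrt2-1)\|q_i^0-q_j^0\|$, whence $\|(q_i-q_j)(t)\|>(2-\sqrt2)\|q_i^0-q_j^0\|>0$. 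By the identity theorem this holomorphic map is the holomorphic extension of the solution of \eqref{eq:Nbody}--\eqref{eq:icond}, which is the claim.

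The only substantive ingredients are Theorem~\ref{th:phys} and Proposition~\ref{prop:r}; everything else is bookkeeping. The one point that warrants a little care — and which I would regard as the main obstacle to a fully rigorous write-up — is ensuring that the same majorant estimate simultaneously delivers the non-collision bound above, so that one may legitimately speak of the holomorphic \emph{extension of the solution} rather than merely of a holomorphic function whose restriction to the real axis happens to be the solution.
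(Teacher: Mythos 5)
Your proposal is correct and follows essentially the same route as the paper: combine the majorant of Theorem~\ref{th:phys} (through the rescaling $\rho(t)=1+\lambda\bigl(t\sqrt{\mu_0^2+\nu_0}\bigr)$) with the radius $r(\eta_0)$ from Proposition~\ref{prop:r} to get convergence of the coordinate series on the stated disk. The additional checks you supply — the degenerate case $\eta_0=0$, the majorant for $v_i$, and the non-collision bound guaranteeing that the sum of the series genuinely extends the solution of \eqref{eq:Nbody}--\eqref{eq:icond} — are details the paper leaves implicit rather than a different method.
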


\begin{remark}
\label{rem:2}
In~\cite{antonana2020},  the statement in Corollary~\ref{cor:1} was proven with a different approach, and a disk of different radius, namely
\begin{equation*}
\sup_{0<\sigma<\sqrt{2}-1}
2 \sigma\, \left(\max_{1 \leq i < j \leq N} \left( \frac{\|v^0_i- v^0_j\|}{ \|q^0_i - q^0_j\|} +
\sqrt{ \frac{\|v^0_i- v^0_j\|^2}{ \|q^0_i - q^0_j\|^2}
+ \kappa(\sigma)\,  \frac{M_{ij}(q^0)}{ \|q^0_i- q^0_j\|} } \right)\right)^{-1},
\end{equation*}
where 
\begin{equation*}
\kappa(\sigma) := \frac{2\, \sigma \, (1 + \sigma)}{(1 - 2 \sigma - \sigma^2)^{3/2}}.
\end{equation*}
If the maxima
\begin{equation*}
\mu_0 = \max_{1 \leq i < j \leq N} \frac{\|v^0_i- v^0_j\|}{ \|q^0_i - q^0_j\|} \quad \mbox{and} \quad
\nu_0  =  \max_{1 \leq i < j \leq N}  \frac{M_{ij}(q^0)}{ \|q^0_i- q^0_j\|}
\end{equation*}
are attained at a common pair of indices $(i,j)$ (which is typically the case, specially in binary close encounters), then this coincides with 
\begin{equation*}
\sup_{0<\sigma<\sqrt{2}-1}
2 \sigma\, \left( \mu_0 +
\sqrt{ \mu_0^2
+ \kappa(\sigma)\,  \nu_0 } \right)^{-1} = \frac{\hat r(\eta_0)}{\sqrt{\mu_0^2+\nu_0}}, 
\end{equation*}
where 
\begin{equation*}
\hat r(\eta_0) := \sup_{0<\sigma<\sqrt{2}-1}
2 \sigma\, \left( \sqrt{\eta_0} +
\sqrt{ \eta_0
+ \kappa(\sigma)\,  (1-\eta_0) } \right)^{-1} .
\end{equation*}
One can check that $\hat r(\eta_0) \leq r(\eta_0)$ for all $\eta_0 \in [0,1]$.
For instance,
\begin{itemize}
\item  if  $\eta_0=1/2$ (corresponding to the case $\nu_0=\mu_0^2$), then $\gamma=-1+\sqrt{2}$ and 
\begin{equation*}
r(1/2) = F(\lambda) = \int_0^{\gamma(1/2)} 
\frac{1}{\sqrt{1/2- 1 +\frac{1}{\sqrt{1-2  \sigma - \sigma^2}}}} d\sigma \approx 0.42812819,
\end{equation*}
while 
\begin{equation*}
\hat r(1/2) = \frac{2\sqrt{2} \sigma}{1 + \sqrt{1 + \kappa(\sigma)}} \approx 0.25796556.
\end{equation*}

\item if  $\eta_0\to 1$,  then $\gamma(\eta_0) \to -1+\sqrt{2}$ and 
\begin{equation*}
r(\eta_0) \to \int_0^{\gamma(\eta_0)}
1 d\sigma = \gamma(\eta_0) = -1+\sqrt{2},
\end{equation*}
which coincides with
\begin{equation*}
\hat r(0) = \sup_{0<\sigma<\sqrt{2}-1} \sigma= -1+\sqrt{2}.
\end{equation*}
\end{itemize}
\end{remark}

\begin{remark}
\label{rem:fit}
We have numerically checked that $0.407 \leq r(\eta) \leq 0.75$ for all $\eta\in[0,1]$, and thus, the following lower and upper  bounds
\begin{equation*}
\frac{0.407}{\sqrt{\mu_0^2 + \nu_0}} < \frac{r(\eta_0)}{\sqrt{\mu_0^2 + \nu_0}} <  \frac{0.75}{\sqrt{\mu_0^2 + \nu_0}}.
\end{equation*}
of the estimate of the radius of convergence given in Corollary~\ref{cor:1} hold.
\end{remark}

\section{The $N$-body problem: Time-renormalized equations}
\label{sect:Nbptre}

Consider an $N$-body problem described with time-renormalized equations
\begin{equation}
\label{eq:Nbodytau}
\begin{array}{lll}
 \displaystyle    \frac{d Q_i}{d \tau} &= s(Q,V)\, V_i, & i=1, \ldots, N, \\
     && \\
 \displaystyle \frac{d V_i}{d \tau} &= \displaystyle s(Q,V)\,  g_i(Q) ,& i=1, \ldots, N,
\end{array}
\end{equation}
where 
$$
g_i(Q) = \sum_{j\neq i} \frac{G\, m_j}{\|Q_i - Q_j\|^{3}}(Q_j-Q_i).
$$
For smooth time-renormalization functions $s(Q,V)$, the solution of (\ref{eq:Nbodytau}) supplemented with initial conditions
\begin{equation}
\label{eq:icondtau}
Q_i(0) = q_i^0, \quad V_i(0) = v_i^0, \quad i=1,\ldots,N,
\end{equation}
admits a unique formal power series expansion ($Q_i \in \R^3[[\tau]]$, $V_i \in \R^3[[\tau]]$, $i=1,\ldots,N$) provided that (\ref{eq:icondcond}) holds.  In what follows, we will always assume that the initial state values (\ref{eq:icondtau}) are non-singular, that is, they satisfy the regularity condition (\ref{eq:icondcond}).

We want to choose a real analytic function $s(Q,V)$ in such a way that the radius of convergence of such power series expansions are uniformly bounded from below by a positive constant $\beta$ for all regular initial values and all values of the masses.  This is motivated by the following observation: assume that one wants to discretize the time-renormalized equations with constant fictitious time-step $\Delta \tau$. This is expected to give poor accuracy if the infimum of the radius of convergence along the solution trajectory is smaller than $|\Delta\tau|$. The sought uniform bound from below $\beta$ of the radius of convergence guarantees that this will not happen provided that $\Delta \tau < \beta$.

Based on a heuristic argument, we will choose $s(Q,V)$ as a real-analytic function such that $s(q^0,v^0)$ is, up to a constant factor, a lower bound of the radius of convergence  of the  series expansion in powers of the physical time $t$ of the solution $(q(t),v(t))$ of the $N$-body problem (\ref{eq:Nbody})--(\ref{eq:icond}). We will first choose such a function, and then we will prove that a uniform bound from below $\beta>0$  of the radius of convergence can be obtained for that particular time-renormalized function.

Based on Corollary~\ref{cor:1} and Remark~\ref{rem:fit},  we determine the time-renormalization function in such a way that
$s(q^0,v^0) \leq (\mu_0^2 + \nu_0)^{-1/2}$, that is,
\begin{equation*}
s(Q,V) \leq \left(\max_{1 \leq i < j \leq N} \left(\frac{\|V_i- V_j\|}{ \|Q_i - Q_j\|}\right)^2 +
\ \max_{1 \leq i < j \leq N}  \frac{M_{ij}(Q)}{ \|Q_i- Q_j\|}\right)^{-1/2}. 
\end{equation*}
Since 
\begin{align*}
\max_{1 \leq i < j \leq N} \left(\frac{\|V_i- V_j\|}{ \|Q_i - Q_j\|}\right)^2 &\leq 
\sum_{1 \leq i < j \leq N} \frac{\|V_i - V_j\|^{2}}{\|Q_i - Q_j\|^{2}}, \\
\max_{1 \leq i < j \leq N}  \frac{M_{ij}(Q)}{ \|Q_i- Q_j\|} &\leq 
\sum_{1 \leq i < j \leq N}
 \frac{M_{ij}(Q)}{\|Q_i - Q_j\|}, 
\end{align*}
the following real analytic function
\begin{equation}
\label{eq:s}
s(Q,V) = \left(\sum_{1 \leq i < j \leq N}
         \frac{\|V_i - V_j\|^{2}}{\|Q_i - Q_j\|^{2}} + \sum_{1 \leq i < j \leq N}
 \frac{M_{ij}(Q)}{\|Q_i - Q_j\|} \right)^{-1/2}
\end{equation}
satisfies the required inequality above.
The time-renormalization function (\ref{eq:s}) was first proposed in~\cite{antonana2020}, and it was proven that the required uniform lower bound of the radius of convergence of the corresponding time-renormalized equations holds for that renormalization function. The rest of the present section will be devoted to prove that result with the technique of majorant equations. With the new proof,  a tighter lower bound $\beta$ is obtained.

\begin{lemma}
\label{lem:3}  
Consider the power series expansion 
$$
(Q,V) = (Q_1,\ldots,Q_N,V_1,\ldots,V_N) \in \R^{6N}[[\tau]]
$$
of the solution of  (\ref{eq:Nbodytau})--(\ref{eq:s}). If the power series 
 \begin{equation}
  \label{eq:xizeta} 
\xi = 1 + \sum_{k\geq 1} \xi_k\, \tau^k  \in  \R_+[[\tau]], \quad
\zeta = \sum_{k\geq 1} \zeta_k\, \tau^k  \in  \R_+[[\tau]]
\end{equation}
are such that, for $1 \leq i < j \leq N$
\begin{align}
\label{eq:Qijtleq}
  Q_i - Q_j &\tleq \|q_i^0 - q_j^0\|\, \xi, \\
  \label{eq:Vijtleq}
  V_i - V_j &\tleq \|v_i^0 - v_j^0\| + s(q^0,v^0) \, M_{ij}(q^0)\,  \zeta,
\end{align}
then, for $1 \leq i < j \leq N$
\begin{align*}
  g_i(Q) \tleq
 K_i(q^0)
  \frac{\xi}{(2-\xi^2)^{3/2}}.
\end{align*}
and 
\begin{align*}
s(Q,V) \tleq s(q^0,v^0) \left( 2 - \chi(\xi,\zeta) \right)^{-1/2},
\end{align*}
where 
\begin{equation}
\label{eq:chi}
\chi(\xi,\zeta) = (2-\xi^2)^{-1} \, (2 \zeta + \zeta^2 +(2-\xi^2)^{-1/2}).
\end{equation}
\end{lemma}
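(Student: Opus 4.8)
The plan is to dispose of the two conclusions separately. The bound on $g_i(Q)$ is exactly the content of Lemma~\ref{lem:phys}: the proof of that lemma uses only the majorant hypothesis on the differences $Q_i-Q_j$ and the algebraic form of $g_i$, so running the very same argument with $\rho$ replaced by $\xi$ (admissible, since $\xi$ is of the form (\ref{eq:rho1})) gives $g_i(Q)\tleq K_i(q^0)\,\xi\,(2-\xi^2)^{-3/2}$ immediately, and only (\ref{eq:Qijtleq}) is used here. For the time-renormalization function I would write $s_0=s(q^0,v^0)$ and $W=s(Q,V)^{-2}$, i.e. $W=\sum_{i<j}\|V_i-V_j\|^2\|Q_i-Q_j\|^{-2}+\sum_{i<j}M_{ij}(Q)\|Q_i-Q_j\|^{-1}$, a power series with constant term $W(q^0,v^0)=s_0^{-2}$; thus $s(Q,V)=s_0\,(s_0^2W)^{-1/2}$ with $s_0^2W$ of the form $1+\sum_{k\ge1}(\cdots)\tau^k$. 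Since $\chi(\xi,\zeta)$ likewise has constant term $1$ and non-negative coefficients --- because $2-\xi^2=1-(\xi^2-1)$ with $\xi^2-1$ and $\zeta$ having non-negative coefficients --- it will suffice, by (\ref{eq:ffb}) with $\nu=-1/2$ and then multiplication by the scalar $s_0\ge0$, to establish the single majorant relation $s_0^2W\unlhd\chi(\xi,\zeta)$.

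To this end I would first collect the elementary majorants following from (\ref{eq:Qijtleq}), (\ref{eq:Vijtleq}) and the rules (\ref{eq:norm}), (\ref{eq:prod}), (\ref{eq:ffb}): from $Q_i-Q_j\tleq\|q_i^0-q_j^0\|\,\xi$ and (\ref{eq:norm}) one obtains $\|Q_i-Q_j\|^2\unlhd\|q_i^0-q_j^0\|^2\xi^2$, hence, applying (\ref{eq:ffb}) with $\nu=-1$ and $\nu=-1/2$ to $\|Q_i-Q_j\|^2\|q_i^0-q_j^0\|^{-2}\unlhd\xi^2$, the bounds $\|q_i^0-q_j^0\|^2\|Q_i-Q_j\|^{-2}\unlhd(2-\xi^2)^{-1}$ and $\|q_i^0-q_j^0\|\,\|Q_i-Q_j\|^{-1}\unlhd(2-\xi^2)^{-1/2}$; weighting the first by the masses and summing gives $K_i(Q)\unlhd K_i(q^0)(2-\xi^2)^{-1}$ and $M_{ij}(Q)\unlhd M_{ij}(q^0)(2-\xi^2)^{-1}$; and (\ref{eq:Vijtleq}) together with (\ref{eq:norm}) gives $\|V_i-V_j\|^2\unlhd(\|v_i^0-v_j^0\|+s_0M_{ij}(q^0)\,\zeta)^2$. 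Setting $u_{ij}=\|v_i^0-v_j^0\|\,\|q_i^0-q_j^0\|^{-1}$, $w_{ij}=M_{ij}(q^0)\|q_i^0-q_j^0\|^{-1}$, $A=s_0^2\sum_{i<j}u_{ij}^2$ and $B=s_0^2\sum_{i<j}w_{ij}$ --- so that $A+B=s_0^2W(q^0,v^0)=1$ and $A,B\in[0,1]$ --- these building blocks combine via (\ref{eq:prod}) and (\ref{eq:add}) into
\begin{align*}
s_0^2\sum_{i<j}\frac{\|V_i-V_j\|^2}{\|Q_i-Q_j\|^2}&\unlhd(2-\xi^2)^{-1}\Bigl(A+2s_0^3\sum_{i<j}u_{ij}w_{ij}\,\zeta+s_0^4\sum_{i<j}w_{ij}^2\,\zeta^2\Bigr),\\
s_0^2\sum_{i<j}\frac{M_{ij}(Q)}{\|Q_i-Q_j\|}&\unlhd B\,(2-\xi^2)^{-3/2}.
\end{align*}

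The step I expect to be the crux is the ensuing scalar estimate. By Cauchy--Schwarz, $\sum_{i<j}u_{ij}w_{ij}\le(\sum_{i<j}u_{ij}^2)^{1/2}(\sum_{i<j}w_{ij}^2)^{1/2}$, and, the $w_{ij}$ being non-negative, $\sum_{i<j}w_{ij}^2\le(\sum_{i<j}w_{ij})^2$; hence $2s_0^3\sum_{i<j}u_{ij}w_{ij}\le2\sqrt{A}\,B$ and $s_0^4\sum_{i<j}w_{ij}^2\le B^2$, so the first right-hand side above is $\unlhd(2-\xi^2)^{-1}(\sqrt{A}+B\zeta)^2$. Adding the two displayed majorants and pulling out the common factor $(2-\xi^2)^{-1}$ then reduces $s_0^2W\unlhd\chi(\xi,\zeta)$ to
$$
(\sqrt{A}+B\zeta)^2+B\,(2-\xi^2)^{-1/2}\unlhd 2\zeta+\zeta^2+(2-\xi^2)^{-1/2}.
$$
Here the two constant terms equal $A+B=1$ and $1$, hence coincide --- this is precisely where $A+B=1$ is used --- while for every $k\ge1$ the $\tau^k$-coefficients obey the inequality because $2\sqrt{A}\,B\le2$, $B^2\le1$ and $B\le1$ (as $A,B\in[0,1]$), whereas $\zeta$, $\zeta^2$ and $(2-\xi^2)^{-1/2}$ have non-negative coefficients. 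This yields $s_0^2W\unlhd\chi(\xi,\zeta)$, and the reduction of the first paragraph completes the proof. The only delicate points are this constant-term bookkeeping and the routine verification that every series fed into (\ref{eq:ffb}) indeed has constant term $1$.
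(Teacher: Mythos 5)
Your proof is correct, and its overall route is the paper's: the same elementary majorants for $\|Q_i-Q_j\|^{-1}$, $\|Q_i-Q_j\|^{-2}$, $K_i(Q)$, $M_{ij}(Q)$ and $\|V_i-V_j\|^2$ obtained from (\ref{eq:norm}), (\ref{eq:prod}) and (\ref{eq:ffb}), the reuse of the Lemma~\ref{lem:phys} argument for $g_i(Q)$, and the same final application of (\ref{eq:ffb}) with $\nu=-1/2$ to the normalized series $s_0^2\,s(Q,V)^{-2}$. The one place you genuinely deviate is the crux estimate $s_0^2W\unlhd\chi(\xi,\zeta)$: the paper works pairwise, using the inequalities (\ref{eq:ss^2}) to bound $\bigl(\|v_i^0-v_j^0\|+s_0M_{ij}(q^0)\zeta\bigr)^2\unlhd\|v_i^0-v_j^0\|^2+\|q_i^0-q_j^0\|\,M_{ij}(q^0)(2\zeta+\zeta^2)$ for each $(i,j)$ before summing, whereas you sum the exact expansions first and then control the scalar coefficients globally via $A+B=1$, Cauchy--Schwarz, and $\sum w_{ij}^2\le(\sum w_{ij})^2$; your constant-term bookkeeping and the bounds $2\sqrt{A}B\le2$, $B^2\le1$, $B\le1$ are all sound, so the argument goes through (and your intermediate majorant is in fact coefficientwise a bit smaller before both are relaxed to $\chi$). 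The trade-off is flexibility: the paper's pairwise use of (\ref{eq:ss^2}) is exactly the ingredient isolated in Section~6 (see (\ref{eq:ss^2bis})), so its proof carries over verbatim to the alternative renormalization functions (\ref{eq:s1}), (\ref{eq:s2}), (\ref{eq:s3}), while your normalization identity $A+B=1$ exploits the specific $2$-norm structure of (\ref{eq:s}) and would need to be reworked for those variants.
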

\begin{proof}
Proceeding as in the proof of Lemma \ref{lem:phys}, we obtain 
\begin{align} \label{eq:Q2}
\forall 1 \leq i < j \leq N, \quad \frac{1}{\|Q_i -Q_j\|^2} \tleq \frac{1}{\|q_i^0 -q_j^0\|^2} \, (2-\xi^2)^{-1} ;
\end{align}
then 
\begin{align*}
\forall 1 \leq i < j \leq N, \quad \frac{1}{\|Q_i -Q_j\|^3} \tleq \frac{1}{\|q_i^0 -q_j^0\|^3} \, (2-\xi^2)^{-3/2}
\end{align*}
 and eventually, for $1 \leq i \leq N$,
\begin{equation*}
 g_i(Q) \tleq
 K_i(q^0)
  \frac{\xi}{(2-\xi^2)^{3/2}}.
\end{equation*}
Similarly, we get
\begin{align} \label{eq:bis}
\forall 1 \leq i < j \leq N, \quad \frac{1}{\|Q_i -Q_j\|} \tleq \frac{1}{\|q_i^0 -q_j^0\|} \, (2-\xi^2)^{-1/2}
\end{align}
so that by summing over $j$ and using relation (\ref{eq:add}) of Proposition \ref{prop:majo}, we get 
\begin{equation} \label{eq:K}
K_i(Q) \tleq  K_i(q^0) \, (2-\xi^2)^{-1}.
\end{equation}
Now, from  assumption (\ref{eq:Vijtleq}) and inequality (\ref{eq:scal}) of Proposition \ref{prop:majo}, we have 
\begin{align} \label{eq:estV}
\|V_i-V_j\|^2 &\tleq \left(\|v_i^0 - v_j^0\| + s(q^0,v^0) \, M_{ij}(q^0)\,  \zeta\right)^2  \nonumber \\
%&\tleq \|v_i^0 - v_j^0\|^2 + 2\,  \|v_i^0 - v_j^0\|\, s(q^0,v^0) \, M_{ij}(q^0)\,  \zeta + s(q^0,v^0)^2 \, M_{ij}(q^0)^2\,  \zeta^2\\
&\tleq \|v_i^0 - v_j^0\|^2 +  \|q_i^0 - q_j^0\|\, M_{ij}(q^0)\,  (2 \zeta + \zeta^2),
\end{align}
where we have further applied the inequalities 
\begin{equation}
  \label{eq:ss^2}
s(q^0,v^0) \leq \frac{\|q_i^0-q_j^0\|}{\|v_i^0-v_j^0\|} \quad \mbox{ and } \quad s(q^0,v^0)^2 \leq \frac{\|q_i^0-q_j^0\|}{M_{i j}(q^0)}.
\end{equation}
Combining (\ref{eq:Q2}) and (\ref{eq:estV}) through inequality (\ref{eq:prod}) of Proposition \ref{prop:majo}, we get 
\begin{align*}
\frac{\|V_i-V_j\|^2}{\|Q_i-Q_j\|^2} &\tleq 
\frac{\|v_i^0 - v_j^0\|^2}{ \|q_i^0 - q_j^0\|^2} \, (2-\xi^2)^{-1}+  \frac{M_{i j}(q^0)}{ \|q_i^0 - q_j^0\|}\,  (2 \zeta + \zeta^2) \, (2-\xi^2)^{-1}.
\end{align*}
Similarly, combining (\ref{eq:bis}) and (\ref{eq:K}), we obtain
\begin{align*}
\frac{M_{ij}(Q)}{\|Q_i -Q_j\|} &\tleq \frac{M_{ij}(q^0)}{\|q_i^0-q_j^0\|}  \, (2-\xi^2)^{-1}  \, (2-\xi^2)^{-1/2}.
\end{align*}
Finally, consider 
\begin{align*}
%  \label{eq:A}
s_A(Q,V) &= 
\sum_{1 \leq i < j \leq N}
         \frac{\|V_i - V_j\|^{2}}{\|Q_i - Q_j\|^{2}}, \\
%\label{eq:B}
  s_B(Q,V) &= 
 \sum_{1 \leq i < j \leq N}
 \frac{M_{ij}(Q)}{\|Q_i - Q_j\|}.
\end{align*}
so that
\begin{equation*}
s(Q,V) = \left(s_A(Q,V) + s_B(Q,V) \right)^{-1/2}.
\end{equation*}
Then,
\begin{align*}
s_A(Q,V) +s_B(Q) &\tleq
 \left(s_A(q^0,v^0)   + s_B(q^0)\, (2 \zeta + \zeta^2 +(2-\xi^2)^{-1/2} ) \right) \, (2-\xi^2)^{-1}, \\
 &\leq  \left(s_A(q^0,v^0)   + s_B(q^0) \right)  \, \chi(\xi,\zeta),
\end{align*}
where $\chi(\xi,\zeta)$ is given 
in terms of $\xi$ and $\zeta$
by (\ref{eq:chi}), and it follows from (\ref{eq:ffb}) with $\nu=-1/2$ that 
\begin{equation*}
s(Q,V) \tleq s(q^0,v^0) \left( 2 - \chi(\xi,\zeta) \right)^{-1/2}.
\end{equation*}
\end{proof}
\begin{lemma}
  \label{lem:4}
Let $\mathcal{B}$ be the set of $(\xi,\zeta) \in   (1 + \tau \R[[\tau]]) \times \tau \R[[\tau]]$ 
%pairs of power series (\ref{eq:xizeta}) 
satisfying (\ref{eq:Qijtleq})--(\ref{eq:Vijtleq}) (with the function $s(Q,V)$ given in (\ref{eq:s})), and consider the operator
\begin{equation}
\label{eq:Psi}
\begin{array}{crcl}
\Psi:& (1 + \tau \R[[\tau]]) \times \tau \R[[\tau]] &\rightarrow&  \R[[\tau]] \times \R[[\tau]]\\
  & (\xi,\zeta)  &\mapsto&  \left(1 + \int (2-\chi(\xi,\zeta))^{-1/2}  \, (1+\zeta),
                          \int  \frac{ (2-\chi(\xi,\zeta))^{-1/2} \, \xi}{(2-\xi^2)^{3/2}} \right).
\end{array}
\end{equation}
Then $\Psi(\mathcal{B}) \subset \mathcal{B}$.
\end{lemma}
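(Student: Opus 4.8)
The plan is to follow the same pattern as in the proof of Lemma~\ref{lem:Phi}: fix $(\xi,\zeta)\in\mathcal{B}$, write the time-renormalized system (\ref{eq:Nbodytau})--(\ref{eq:icondtau}) in integrated form, form the pairwise differences $Q_i-Q_j$ and $V_i-V_j$, and check that the two components of $(\tilde\xi,\tilde\zeta):=\Psi(\xi,\zeta)$ satisfy (\ref{eq:Qijtleq})--(\ref{eq:Vijtleq}) respectively. The membership $(\tilde\xi,\tilde\zeta)\in(1+\tau\R[[\tau]])\times\tau\R[[\tau]]$ is then immediate, since each component of $\Psi$ is $1$ (respectively $0$) plus a formal antiderivative, and $\int$ maps $\R[[\tau]]$ into $\tau\R[[\tau]]$. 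Preliminarily I would note that $\Psi$ is well defined on $(1+\tau\R[[\tau]])\times\tau\R[[\tau]]$ because the constant terms of both $2-\xi^2$ and $2-\chi(\xi,\zeta)$ equal $1$, so the negative powers occurring in (\ref{eq:Psi}) are genuine elements of $\R[[\tau]]$.

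Since $(\xi,\zeta)\in\mathcal{B}$ means precisely that (\ref{eq:Qijtleq})--(\ref{eq:Vijtleq}) hold, Lemma~\ref{lem:3} applies and provides, for all $1\le i<j\le N$, the majorants $g_i(Q)\tleq K_i(q^0)\,\xi\,(2-\xi^2)^{-3/2}$ and $s(Q,V)\tleq s(q^0,v^0)\,(2-\chi(\xi,\zeta))^{-1/2}$. For the $V$-component I would combine the bounds for $g_i$ and $g_j$ through rule (\ref{eq:add}) to get $g_i(Q)-g_j(Q)\tleq M_{ij}(q^0)\,\xi\,(2-\xi^2)^{-3/2}$, multiply by the bound on $s(Q,V)$ via (\ref{eq:prod}), integrate via (\ref{eq:int}), and add $v_i^0-v_j^0\tleq\|v_i^0-v_j^0\|$ to the integral form $V_i-V_j=(v_i^0-v_j^0)+\int s(Q,V)(g_i(Q)-g_j(Q))$. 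This reproduces exactly $V_i-V_j\tleq\|v_i^0-v_j^0\|+s(q^0,v^0)M_{ij}(q^0)\,\tilde\zeta$ with $\tilde\zeta$ the second component of $\Psi(\xi,\zeta)$.

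For the $Q$-component the argument runs in parallel but needs one extra ingredient. From $s(Q,V)\tleq s(q^0,v^0)(2-\chi(\xi,\zeta))^{-1/2}$, the hypothesis $V_i-V_j\tleq\|v_i^0-v_j^0\|+s(q^0,v^0)M_{ij}(q^0)\zeta$, and (\ref{eq:prod}), one gets $s(Q,V)(V_i-V_j)\tleq s(q^0,v^0)(2-\chi(\xi,\zeta))^{-1/2}\bigl(\|v_i^0-v_j^0\|+s(q^0,v^0)M_{ij}(q^0)\zeta\bigr)$. Here I would invoke the two inequalities (\ref{eq:ss^2}) --- which hold by the definition (\ref{eq:s}) of $s$ and were already used inside Lemma~\ref{lem:3} --- namely $s(q^0,v^0)\|v_i^0-v_j^0\|\le\|q_i^0-q_j^0\|$ and $s(q^0,v^0)^2M_{ij}(q^0)\le\|q_i^0-q_j^0\|$, together with $\zeta_k\ge0$, to obtain the coefficientwise bound $s(q^0,v^0)\bigl(\|v_i^0-v_j^0\|+s(q^0,v^0)M_{ij}(q^0)\zeta\bigr)\tleq\|q_i^0-q_j^0\|(1+\zeta)$. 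Multiplying through by the nonnegative series $(2-\chi(\xi,\zeta))^{-1/2}$ (again (\ref{eq:prod})) and using transitivity (\ref{eq:trans}), then integrating and adding $q_i^0-q_j^0\tleq\|q_i^0-q_j^0\|$ to $Q_i-Q_j=(q_i^0-q_j^0)+\int s(Q,V)(V_i-V_j)$, yields $Q_i-Q_j\tleq\|q_i^0-q_j^0\|\bigl(1+\int(2-\chi(\xi,\zeta))^{-1/2}(1+\zeta)\bigr)=\|q_i^0-q_j^0\|\,\tilde\xi$, which is (\ref{eq:Qijtleq}) for $\tilde\xi$. Together with the previous paragraph this shows $(\tilde\xi,\tilde\zeta)\in\mathcal{B}$.

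The only step that is not purely mechanical is this last use of (\ref{eq:ss^2}) to absorb all the constants $s(q^0,v^0)$, $\|v_i^0-v_j^0\|$ and $M_{ij}(q^0)$ back into the single factor $\|q_i^0-q_j^0\|$; it is exactly this feature that makes $\Psi$ stable on $\mathcal{B}$, and it hinges on the particular choice (\ref{eq:s}) of the renormalization function rather than on any general principle. Everything else is a routine chain of applications of the rules collected in Proposition~\ref{prop:majo}.
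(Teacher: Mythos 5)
Your proof is correct and follows essentially the same route as the paper: write the pairwise differences of the time-renormalized equations in integral form, apply Lemma~\ref{lem:3} to majorize $s(Q,V)$ and $g_i(Q)-g_j(Q)$, and use the inequalities (\ref{eq:ss^2}) to absorb the constants into $\|q_i^0-q_j^0\|$, recovering exactly the two components of $\Psi(\xi,\zeta)$. The extra remarks on well-definedness of $\Psi$ (constant terms equal to $1$) and on membership in $(1+\tau\R[[\tau]])\times\tau\R[[\tau]]$ are fine but not needed beyond what the paper leaves implicit.
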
 
\begin{proof}
Recall that
\begin{align}
\label{eq:Qi-Qj}  
Q_i - Q_j &= q_i^0 - q_j^0 + \int s(Q,V) \, (V_i - V_j), \\
\label{eq:Vi-Vj}  
V_i -V_j &= v_i^0 - v_j^0 + \int s(Q,V) \, \left(g_i(Q)-g_j(Q)\right).
\end{align}
If $(\xi,\zeta)\in \mathcal{B}$, then (\ref{eq:Qi-Qj}), Lemma~\ref{lem:3}, and (\ref{eq:ss^2}) imply that, for
$1 \leq i < j \leq N$
\begin{align*}
Q_i - Q_j &\tleq \|q_i^0 - q_j^0\| + s(q^0,v^0) \,  \|v_i^0-v_j^0\| \, \int (2-\chi(\xi,\zeta))^{-1/2} \\
 & + s(q^0,v^0)^2 \, M_{ij}(q^0)\,  \int (2-\chi(\xi,\zeta))^{-1/2} \, \zeta
  \\
  &\tleq \|q_i^0 - q_j^0\| \left(1 + \int (2-\chi(\xi,\zeta))^{-1/2}  \, (1+\zeta) \right).
\end{align*}
Similarly,  (\ref{eq:Vi-Vj}) and Lemma~\ref{lem:3} imply that
\begin{align*}
 V_i - V_j &\tleq \|v_i^0 - v_j^0\| + s(q^0,v^0)\,  M_{ij}(q^0) \, \int  \frac{ (2-\chi(\xi,\zeta))^{-1/2} \, \xi}{(2-\xi^2)^{3/2}}.
  \end{align*}
  Hence, we conclude that $\Psi(\xi,\zeta) \in \mathcal{B}$. 
\end{proof}

\begin{theorem}
\label{th:reg}
  The power series representation 
 $$
 (Q,V) = (Q_1,\ldots,Q_N,V_1,\ldots,V_N) \in \R^{6N}[[\tau]]
 $$
 of the solution  of equations (\ref{eq:Nbodytau})--(\ref{eq:s})  satisfies (\ref{eq:Qijtleq})--(\ref{eq:Vijtleq}), where
  $(\xi,\zeta)  \in  \R_+[[\tau]] \times  \R_+[[\tau]]$ is the power series solution of the following initial value problem
  \begin{equation}
  \label{eq:xizetaode}
    \begin{split}
       \xi' &=   (1+\zeta)\, (2-\chi(\xi,\zeta))^{-1/2}, \quad \xi(0)=1,\\
       \zeta'  &=  
      \frac{\xi\,  (2-\chi(\xi,\zeta))^{-1/2}}{(2-\xi^2)^{3/2}}, \quad \zeta(0)=0,
    \end{split}  
\end{equation}
where $\chi(\xi,\zeta)$ is given as a function of $(\xi,\zeta)$ by (\ref{eq:chi}).
\end{theorem}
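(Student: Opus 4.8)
The plan is to mimic the proof of Theorem~\ref{th:phys}, replacing the operator $\Phi$ by the operator $\Psi$ of Lemma~\ref{lem:4}. First I would observe that, once integrated with the prescribed initial data, the system (\ref{eq:xizetaode}) is exactly the fixed-point equation $(\xi,\zeta)=\Psi(\xi,\zeta)$. Since $\chi(1,0)=1$ (so $2-\chi$ is nonzero at $(\xi,\zeta)=(1,0)$) and $2-\xi^2$ does not vanish at $\xi=1$, the right-hand side of (\ref{eq:xizetaode}) is analytic at $(1,0)$, so the system has a unique formal power series solution, which is therefore the unique fixed point of $\Psi$ in $(1+\tau\R[[\tau]])\times\tau\R[[\tau]]$.

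Next I would build a starting iterate from the true solution: set $\xi^{[0]}_0=1$, $\zeta^{[0]}_0=0$, and for $k\geq 1$
$$
\xi^{[0]}_k = \max_{i\neq j} \frac{\|(Q_i-Q_j)_k\|}{\|q_i^0-q_j^0\|}, \qquad
\zeta^{[0]}_k = \max_{i<j} \frac{\|(V_i-V_j)_k\|}{s(q^0,v^0)\,M_{ij}(q^0)},
$$
where $(\cdot)_k$ denotes the coefficient of $\tau^k$. By construction $(\xi^{[0]},\zeta^{[0]})\in\R_+[[\tau]]\times\R_+[[\tau]]$ and it satisfies (\ref{eq:Qijtleq})--(\ref{eq:Vijtleq}); note that at $k=0$ the constant term $v_i^0-v_j^0$ of $V_i-V_j$ already accounts for the right-hand side of (\ref{eq:Vijtleq}). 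Hence $(\xi^{[0]},\zeta^{[0]})\in\mathcal{B}$. Defining $(\xi^{[m]},\zeta^{[m]})=\Psi(\xi^{[m-1]},\zeta^{[m-1]})$ for $m\geq 1$, Lemma~\ref{lem:4} gives $(\xi^{[m]},\zeta^{[m]})\in\mathcal{B}$ for all $m$, so for every $m\geq 0$ and $1\leq i<j\leq N$ one has $Q_i-Q_j\tleq\|q_i^0-q_j^0\|\,\xi^{[m]}$ and $V_i-V_j\tleq\|v_i^0-v_j^0\|+s(q^0,v^0)\,M_{ij}(q^0)\,\zeta^{[m]}$.

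Then I would show the iteration stabilizes coefficientwise. The operator $\Psi$ is Noetherian in the same sense as in Theorem~\ref{th:phys}: each component of $\Psi$ is $\int$ applied to a formal-analytic function of $(\xi,\zeta)$ (well defined because $2-\chi$ and $2-\xi^2$ are nonzero at $(1,0)$, resp. $1$), so the coefficient of $\tau^k$ in $\Psi(\xi,\zeta)$ is a polynomial in the coefficients $\xi_l,\zeta_l$ with $l\leq k-1$. Arguing as in Theorem~\ref{th:phys} — by induction on the smallest index whose coefficient sequence fails to be eventually constant — one concludes that for each $k$ the sequences $\{\xi^{[m]}_k\}_m$ and $\{\zeta^{[m]}_k\}_m$ are eventually constant, so $(\xi^{[m]},\zeta^{[m]})$ converges to a formal power series limit $(\xi^{[\infty]},\zeta^{[\infty]})\in\R_+[[\tau]]\times\R_+[[\tau]]$, which is a fixed point of $\Psi$. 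By the uniqueness noted above, $(\xi^{[\infty]},\zeta^{[\infty]})$ equals the power series solution $(\xi,\zeta)$ of (\ref{eq:xizetaode}); since each coefficient sequence on the right of the displayed majorant relations is eventually constant at its limiting value, the inequalities pass to the limit $m\to\infty$, yielding (\ref{eq:Qijtleq})--(\ref{eq:Vijtleq}) for this $(\xi,\zeta)$.

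The routine parts are the verification that $\Psi$ is Noetherian and the bookkeeping in the stabilization argument; the only point needing a little care is checking that the starting iterate genuinely lies in $\mathcal{B}$ and that nonnegativity of the limit is preserved (so the majorant relations are meaningful), both of which are immediate because $\Psi$ sends nonnegative series to nonnegative series. I do not expect any essential obstacle beyond faithfully transcribing the argument of Theorem~\ref{th:phys} to this two-component setting.
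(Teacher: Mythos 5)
Your proposal is correct and follows essentially the same route as the paper: iterate the operator $\Psi$ of Lemma~\ref{lem:4} starting from the series built from the coefficients of the actual solution, use the Noetherian property to get coefficientwise stabilization, and identify the limit with the unique fixed point of $\Psi$, i.e.\ the power series solution of (\ref{eq:xizetaode}). Your explicit maximum over $i<j$ in the definition of $(\xi^{[0]},\zeta^{[0]})$ and the check that $\chi(1,0)=1$ are harmless clarifications of details the paper leaves implicit.
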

\begin{proof}
Our proof of Theorem~\ref{th:reg} mimics the proof of Theorem~\ref{th:phys}. We begin by considering the sequence $\{\xi^{[m]},\zeta^{[m]}\}_{m\in \N}$, where $(\xi^{[m]},\zeta^{[m]}) = \Psi(\xi^{[m-1]},\zeta^{[m-1]})$ for $m\geq 1$
and 
$$
\xi^{[0]} = 1 + \sum_{k\geq 1} \xi^{[0]} _k\, \tau^k  \in  \R_+[[\tau]], \quad
\zeta^{[0]}  = \sum_{k\geq 1} \zeta^{[0]} _k\, \tau^k  \in  \R_+[[\tau]]
$$ 
is such that for $1 \leq i < j \leq N$ and $m\geq 1$, 
\begin{equation*}
\xi^{[0]} _m =  \frac{\|(Q_i - Q_j)_m\|}{\|q_i^0 - q_j^0\|}  \quad \mbox{ and } \quad 
\zeta^{[0]} _m =   \frac{\|(V_i - V_j)_m\|}{s(q^0,v^0) \, M_{ij}(q^0)}.
\end{equation*}
Clearly, $(\xi^{[0]} ,\zeta^{[0]} ) \in \mathcal{B}$, so that by Lemma~\ref{lem:4},
%\begin{align} 
%\label{eq:majiterbis}
%   1\leq i<j\leq N, \quad 
%  \left\{
%  \begin{array}{llc}
%  Q_i - Q_j &\tleq \|q_i^0 - q_j^0\| \xi^m \\
%  V_i - V_j &\tleq \|v_i^0 - v_j^0\| + s(q^0,v^0)\,  M_{ij}(q^0) \, \zeta^m
%  \end{array}
%  \right.
%\end{align}
$(\xi^{[m]} ,\zeta^{[m]}) \in \mathcal{B}$
for $m\geq 1$.  Proceeding as in  the proof of Theorem \ref{th:phys}, one concludes that the sequence 
$\{\xi^{[m]},\zeta^{[m]}\}_{m\in \N}$ converges (in the sense of each coefficient of the two series are ultimately constant) towards a limit $(\xi^{[\infty]},\zeta^{[\infty]}) \in \mathcal{B}$, which is the unique solution of the fixed point equation 
$$
(\xi^{[\infty]},\zeta^{[\infty]}) =\Psi(\xi^{[\infty]},\zeta^{[\infty]}).
$$
We thus have that estimates (\ref{eq:Qijtleq})--(\ref{eq:Vijtleq}) hold for $(\xi,\zeta) =(\xi^{[\infty]},\zeta^{[\infty]})$ the solution of
\begin{equation*}
\left\{
\begin{array}{rcl}
\xi &=& 1 + \int (2-\chi(\xi,\zeta))^{-1/2}  \, (1+\zeta), \\
\zeta &=& \int  \frac{ (2-\chi(\xi,\zeta))^{-1/2} \, \xi}{(2-\xi^2)^{3/2}} 
\end{array}
\right.
\end{equation*}
(where $\chi(\xi,\zeta)$ is given in terms of $\xi$ and $\zeta$ by (\ref{eq:chi})), 
or in other words, the unique power series solution of (\ref{eq:xizetaode}). 
\end{proof}

\begin{theorem}
\label{th:reg2}
Under the assumptions of Theorem~\ref{th:reg}, for $1\leq i \leq N$,
\begin{align*}
V_i - v_i^0 & \tleq s(q^0,v^0) \, K_i(q^0) \, \zeta, \\
Q_i - q_i^0 & \tleq \max\left(s(q^0,v^0) \, \|v_i^0\|,  s(q^0,v^0)^2 \, K_i(q^0) \right)  \, (\xi-1) . 
\end{align*}\end{theorem}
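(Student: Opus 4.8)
The plan is to derive both majorant bounds directly from the integral equations for $V_i$ and $Q_i$ in the renormalized system, combining the estimates of Lemma~\ref{lem:3} with the defining integral equations \eqref{eq:xizetaode} for $(\xi,\zeta)$ already established in Theorem~\ref{th:reg}. First I would write the integral form of the second equation of \eqref{eq:Nbodytau}, namely
$$
V_i - v_i^0 = \int s(Q,V)\, g_i(Q).
$$
By Theorem~\ref{th:reg} the hypotheses \eqref{eq:Qijtleq}--\eqref{eq:Vijtleq} hold for the solution $(\xi,\zeta)$ of \eqref{eq:xizetaode}, so Lemma~\ref{lem:3} gives $g_i(Q) \tleq K_i(q^0)\, \xi\,(2-\xi^2)^{-3/2}$ and $s(Q,V) \tleq s(q^0,v^0)\,(2-\chi(\xi,\zeta))^{-1/2}$. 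Applying \eqref{eq:prod} and \eqref{eq:int} of Proposition~\ref{prop:majo} yields
$$
V_i - v_i^0 \tleq s(q^0,v^0)\, K_i(q^0) \int \frac{(2-\chi(\xi,\zeta))^{-1/2}\,\xi}{(2-\xi^2)^{3/2}} = s(q^0,v^0)\, K_i(q^0)\, \zeta,
$$
where the last equality is exactly the second line of \eqref{eq:xizetaode} in integrated form. This settles the first claim.

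For the second claim I would start from the integral form of the first equation of \eqref{eq:Nbodytau},
$$
Q_i - q_i^0 = \int s(Q,V)\, V_i = \int s(Q,V)\, v_i^0 + \int s(Q,V)\,(V_i - v_i^0).
$$
Using $s(Q,V) \tleq s(q^0,v^0)\,(2-\chi(\xi,\zeta))^{-1/2}$ on the first term (with \eqref{eq:prod} applied to the constant vector $v_i^0$) and the already-established bound $V_i - v_i^0 \tleq s(q^0,v^0)\,K_i(q^0)\,\zeta$ together with Lemma~\ref{lem:3} on the second, I get
$$
Q_i - q_i^0 \tleq s(q^0,v^0)\,\|v_i^0\| \int (2-\chi(\xi,\zeta))^{-1/2} + s(q^0,v^0)^2\, K_i(q^0) \int (2-\chi(\xi,\zeta))^{-1/2}\,\zeta.
$$
Now I would bound each of the two integrals by $\int (2-\chi(\xi,\zeta))^{-1/2}\,(1+\zeta) = \xi - 1$ (the first line of \eqref{eq:xizetaode} in integrated form), since both $\int(2-\chi)^{-1/2}$ and $\int(2-\chi)^{-1/2}\zeta$ are majored coefficient-wise by $\int(2-\chi)^{-1/2}(1+\zeta)$. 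Pulling out the common factor $(\xi-1)$ and replacing the two coefficients $s(q^0,v^0)\|v_i^0\|$ and $s(q^0,v^0)^2 K_i(q^0)$ by their maximum gives the stated bound $Q_i - q_i^0 \tleq \max(s(q^0,v^0)\|v_i^0\|, s(q^0,v^0)^2 K_i(q^0))\,(\xi-1)$.

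The only mild subtlety — and the step I would check most carefully — is the comparison $\int(2-\chi)^{-1/2} \tleq \xi-1$ and $\int(2-\chi)^{-1/2}\zeta \tleq \xi-1$: these follow because $(2-\chi(\xi,\zeta))^{-1/2}$ has non-negative coefficients (it is a composition of series with non-negative coefficients, as used in Lemma~\ref{lem:3}) and because $\zeta \in \R_+[[\tau]]$, so that $1 \tleq 1+\zeta$ and $\zeta \tleq 1+\zeta$ termwise, whence multiplication by the non-negative series $(2-\chi)^{-1/2}$ and integration preserve the inequalities by \eqref{eq:prod} and \eqref{eq:int}. Everything else is a direct transcription of the integral equations of Theorem~\ref{th:reg} together with Lemma~\ref{lem:3}, so I do not anticipate any genuine obstacle beyond bookkeeping.
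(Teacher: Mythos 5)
Your route is the same as the paper's: both bounds are read off from the integral forms $V_i - v_i^0 = \int s(Q,V)\,g_i(Q)$ and $Q_i - q_i^0 = \int s(Q,V)\,V_i$ (with the split $V_i = v_i^0 + (V_i - v_i^0)$), combined with Lemma~\ref{lem:3}, the inequalities (\ref{eq:ss^2}), and the integrated form of (\ref{eq:xizetaode}). The first bound and your intermediate estimate $Q_i - q_i^0 \tleq s(q^0,v^0)\,\|v_i^0\| \int (2-\chi(\xi,\zeta))^{-1/2} + s(q^0,v^0)^2\, K_i(q^0) \int (2-\chi(\xi,\zeta))^{-1/2}\,\zeta$ are exactly what appears in the paper's proof.

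The final step, however, as you phrase it does not deliver the stated inequality. If you first bound each of the two integrals separately by $\int (2-\chi(\xi,\zeta))^{-1/2}(1+\zeta) = \xi-1$ and then add, you obtain $\bigl(s(q^0,v^0)\,\|v_i^0\| + s(q^0,v^0)^2\,K_i(q^0)\bigr)(\xi-1)$; replacing the two coefficients by their maximum at that stage gives $2\max(\cdot)\,(\xi-1)$, not $\max(\cdot)\,(\xi-1)$ --- a sum of two non-negative terms is not dominated by their maximum. The correct order of operations (and what the paper does implicitly in one line) is to replace the two non-negative scalar coefficients by their maximum \emph{first}, and only then use that the two integrands add up: writing $A=\int (2-\chi(\xi,\zeta))^{-1/2}$ and $B=\int (2-\chi(\xi,\zeta))^{-1/2}\,\zeta$, both with non-negative coefficients, one has $c_1 A + c_2 B \tleq \max(c_1,c_2)\,(A+B) = \max(c_1,c_2)\,(\xi-1)$ by the first equation of (\ref{eq:xizetaode}). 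With that one-line reordering your argument coincides with the paper's proof; as written, the last step loses a factor that the theorem's statement does not allow.
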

\begin{proof}
The following majorants for $Q_i -q_i^0$ and $V_i -v_i^0$ can be obtained from  $Q_i - q_i^0 = \int s(Q,V) V_i$ and $V_i - v_i^0 = \int s(Q,V) g_i(Q)$ 
respectively by virtue of Lemma~\ref{lem:3},
\begin{align*}
V_i - v_i^0 & \tleq s(q^0,v^0) \, K_i(q^0) \, \int  \frac{\xi\,  (2 - \chi(\xi,\zeta))^{-1/2}}{(2-\xi^2)^{3/2}} = s(q^0,v^0) \, K_i(q^0) \, \zeta, \\
Q_i - q_i^0  & \tleq s(q^0,v^0) \, \|v_i^0\| \, \int (2 - \chi(\xi,\zeta))^{-1/2}  +  s(q^0,v^0)^2 \, K_i(q^0)  \,  \int \zeta \, (2 - \chi(\xi,\zeta))^{-1/2} \\
& \tleq \max\left(s(q^0,v^0) \, \|v_i^0\|,  s(q^0,v^0)^2 \, K_i(q^0) \right)  \, (\xi-1) . 
\end{align*}
\end{proof}

\begin{proposition}
\label{prop:R}
 The radius of convergence $R$ of the power series solution $(\xi, \zeta)$ of (\ref{eq:xizetaode}) is given by 
\begin{align}
 \label{eq:R}
R=G(\xi) = \int_0^{v_+} g(\sigma) d\sigma \approx  0.0839968103939379,
\end{align}
where 
$$
g(\sigma) = 2\,{\frac {1}{ \left( {\sigma}^{2}+2\,\sigma+2 \right) ^{2}}\sqrt {-
\,{\frac {3\,{\sigma}^{6}+18\,{\sigma}^{5}+50\,{\sigma}^{4}+80\,{\sigma}^{
3}+76\,{\sigma}^{2}+40\,\sigma-8}{{\sigma}^{4}+4\,{\sigma}^{3}+8\,{\sigma}^
{2}+8\,\sigma+2}}}}
$$
and 
\begin{align*}
v_{+} &=-1 +  {\frac { \sqrt {502+18\,\sqrt {777}-5
\, \left( 251+9\,\sqrt {777} \right) ^{2/3}+8\,\sqrt [3]{251+9\,\sqrt 
{777}}}}{3\,\sqrt [3]{251+9\,\sqrt {777}}}}\\
&\approx 0.149902575567304.
\end{align*}
\end{proposition}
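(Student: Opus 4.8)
The plan is to mirror the proof of Proposition~\ref{prop:r}, after noticing that the two-component system (\ref{eq:xizetaode}) reduces to a single quadrature through a first integral.

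First I would record that the right-hand side of (\ref{eq:xizetaode}) is holomorphic in $(\xi,\zeta)$ in a neighbourhood of $(1,0)$: there $2-\xi^2=1$ and $\chi(1,0)=1$, so neither $2-\xi^2$ nor $2-\chi(\xi,\zeta)$ vanishes, whence the power series solution $(\xi,\zeta)$ of (\ref{eq:xizetaode}) converges with some radius $R>0$. Writing $\xi=1+\sigma$ and using that $(1-2\sigma-\sigma^2)^{-r}=(1-(2\sigma+\sigma^2))^{-r}$ has non-negative Taylor coefficients for every $r>0$, while $2-\chi$ equals $1$ minus a power series in $(\sigma,\zeta)$ with non-negative coefficients and vanishing constant term, one checks that both components of the right-hand side of (\ref{eq:xizetaode}), as functions of $(\sigma,\zeta)$, have non-negative Taylor coefficients at the origin. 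The Taylor recursion associated with the differential equation then gives $\xi_k\geq 0$ and $\zeta_k\geq 0$ for all $k\geq 1$, by the same Noetherian-type induction as in the proof of Theorem~\ref{th:phys}. In particular $\xi$ and $\zeta$ are real-analytic and strictly increasing on $[0,R)$, with $\xi(0)=1$ and $\zeta(0)=0$.

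The key algebraic step is to divide the two equations of (\ref{eq:xizetaode}): the common factor $(2-\chi(\xi,\zeta))^{-1/2}$ cancels and one is left with $(1+\zeta)\,\zeta'=\xi\,(2-\xi^2)^{-3/2}\,\xi'$, which integrates from $\tau=0$ to the first integral $1+\zeta+\tfrac12\zeta^2=(2-\xi^2)^{-1/2}$. Introducing $p:=1+\zeta+\tfrac12\zeta^2=\tfrac12(\zeta^2+2\zeta+2)$, this reads $2-\xi^2=p^{-2}$, and substituting $2\zeta+\zeta^2=2(p-1)$ into (\ref{eq:chi}) yields $\chi=3p^3-2p^2$, hence $2-\chi=2+2p^2-3p^3$. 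Since $p$ increases from $1$ along the solution, $2-\chi$ decreases from $1$ to $-\infty$ and therefore vanishes at a finite $p_\star>1$; there $2-\xi^2=p_\star^{-2}>0$, which shows that the singularity $2-\chi=0$ is reached strictly before $2-\xi^2=0$ and is the only obstruction to continuation on $[0,R)$.

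Finally, taking $\zeta$ as independent variable on $[0,R)$ (legitimate because $\zeta$ is strictly increasing), one has $d\tau/d\zeta=1/\zeta'=\xi^{-1}(2-\xi^2)^{3/2}(2-\chi)^{1/2}$; inserting $\xi=p^{-1}\sqrt{2p^2-1}$, $2-\xi^2=p^{-2}$, $2-\chi=2+2p^2-3p^3$ and $p=\tfrac12(\sigma^2+2\sigma+2)$ with $\sigma=\zeta$, and clearing denominators, turns this into exactly $g(\sigma)$; along the way one verifies that $(\sigma^2+2\sigma+2)^2-2$ and $3(\sigma^2+2\sigma+2)^3-4(\sigma^2+2\sigma+2)^2-16$ are the quartic and sextic appearing in $g$, and that both keep a constant sign on $[0,v_+)$, so the integrand is real and positive there. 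Hence $\tau=\int_0^\zeta g(\sigma)\,d\sigma$ for $0\leq\zeta<v_+$, where $v_+$ is the value of $\zeta$ at which $2-\chi=0$, i.e. the positive root of the sextic, equivalently the $\zeta$ with $\tfrac12(v_+^2+2v_++2)=p_\star$ and $3p_\star^3-2p_\star^2-2=0$; Cardano's formula for this cubic, combined with $v_+=-1+\sqrt{2p_\star-1}$, produces the displayed radical. Since the integrand vanishes like $\sqrt{v_+-\zeta}$ as $\zeta\to v_+$, the integral converges, so $\tau\to\int_0^{v_+}g(\sigma)\,d\sigma=:R_\star$ as $\zeta\to v_+^-$; because the right-hand side of (\ref{eq:xizetaode}) is singular at the limiting point, the solution cannot be continued beyond $\tau=R_\star$, and since the Taylor coefficients of $(\xi,\zeta)$ are non-negative the radius of convergence equals the distance to this singularity on the positive real axis, i.e. $R=R_\star$; the numerical value follows by evaluating the quadrature. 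The only genuine obstacle is computational: the polynomial simplification that identifies $g$ and the explicit solution of the cubic --- conceptually everything is forced once the first integral has been spotted.
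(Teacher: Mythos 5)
Your argument is correct and essentially the paper's own: you exploit the same first integral $(2-\xi^2)^{-1/2}=1+\zeta+\tfrac12\zeta^2$ (the paper's $\gamma+1$ with $\gamma=\zeta+\zeta^2/2$) to reduce (\ref{eq:xizetaode}) to a scalar quadrature for $\zeta$, and then invoke the positivity-of-coefficients singularity argument of Proposition~\ref{prop:r} to conclude $R=\int_0^{v_+}g(\sigma)\,d\sigma$. As a minor point in your favor, your identification of $v_+$ as the positive root of the numerator sextic (equivalently where $2-\chi=0$, i.e.\ $3p^3-2p^2-2=0$) is the one consistent with the stated closed form and numerical value, whereas the paper's proof describes $v_+$ as the smallest-modulus root of the quartic $\sigma^4+4\sigma^3+8\sigma^2+8\sigma+2$, which appears to be a slip.
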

\begin{proof}
  The solution $(\xi, \zeta)$ of (\ref{eq:xizetaode}), where  $\chi=\chi(\xi,\zeta)$ is given by (\ref{eq:chi}),
can  be computed alternatively as follows: obtain $\zeta$ as the initial value problem
\begin{equation} \label{eq:zeta}
      \zeta'  =  
       (1 + \gamma)^2 \sqrt{\frac{1 + 4\gamma + 2 \gamma^2}{2-\chi}}, \quad \zeta(0)=0,  
\end{equation}
where $\gamma =\zeta + \zeta^2/2$ and $\chi = (1+\gamma)^2 (1 + 3\, \gamma)$, and then
\begin{equation}
  \label{eq:xiaux}
\xi = \frac{\sqrt{1 + 4\,\gamma + 2 \, \gamma^2} }{\gamma + 1}.
\end{equation}
Indeed, it is straightforward to check that $ (2 - \xi^2)^{-1/2} = \gamma + 1$ holds for the solution of (\ref{eq:xizetaode}), which implies that  $\chi = (1+\gamma)^2 (1 + 3\, \gamma)$ and
\begin{equation*}
  \xi = \sqrt{2 - \left(\gamma+ 1 \right)^{-2}} = \frac{\sqrt{1 + 4\,\gamma + 2 \, \gamma^2} }{\gamma + 1}.
\end{equation*}
Being majorant series by construction, both $\zeta$ and $\xi$ have expansions in powers of $t$ with real positive coefficients. Using the same argument for $\zeta$ as for $\lambda$ in Proposition \ref{prop:r}, we can show that equation (\ref{eq:zeta}) has an analytic solution $\zeta(\tau)$ on the disk $D_R(0)$, where 
\begin{align*}
R &= \int_0^{v_+} g(\sigma) d\sigma, \\
g(\sigma) &= 2\,{\frac {1}{ \left( {\sigma}^{2}+2\,\sigma+2 \right) ^{2}}\sqrt {-
\,{\frac {3\,{\sigma}^{6}+18\,{\sigma}^{5}+50\,{\sigma}^{4}+80\,{\sigma}^{
3}+76\,{\sigma}^{2}+40\,\sigma-8}{{\sigma}^{4}+4\,{\sigma}^{3}+8\,{\sigma}^
{2}+8\,\sigma+2}}}},
\end{align*}
and 
$$
v_+ := \sup_{\tau \in D_R(0)} |\zeta(\tau)|
$$
is the root of $\sigma^4+4 \sigma^3 + 8 \sigma^2 + 8 \sigma+2$ with smallest modulus. As the right-hand side of equation (\ref{eq:xiaux}) is also analytic on $D_{v_+}(0)$ as a function of $\xi$, the other component $\xi(t)$ of the solution of equation (\ref{eq:xizetaode}) is well-defined and analytic on the same disk $D_R(0)$. 
\end{proof}

In view of Theorem~\ref{th:phys}, we conclude that the power series expansion
$$
(Q,V) = (Q_1,\ldots,Q_N,V_1,\ldots,V_N) \in \R^{6N}[[\tau]]
$$
of the solution of  (\ref{eq:Nbodytau})--(\ref{eq:icondtau}) with (\ref{eq:icondcond}) is convergent for all $\tau \in (-R,R)$. Hence, we get as a corollary of Theorem~\ref{th:reg} 
the following result, originally proven in~\cite{antonana2020} with $R =  0.0444443$.

\begin{corollary}
\label{cor:strip}
The solution of  (\ref{eq:Nbodytau})--(\ref{eq:icondtau}) with (\ref{eq:icondcond}) admits an holomorphic extension as a function of the complex time $\tau$ in the strip 
\begin{equation}
\label{eq:strip}
\{ \tau \in \C\ : \ |\mathrm{Im}(\tau)| <  \beta=0.0839968103939379\}. 
\end{equation}
%where $R>0$ is the radius of convergence of the power series solution  $(\xi,\zeta)  \in  \R_+[[\tau]] \times  \R_+[[\tau]]$ of (\ref{eq:xizetaode}). 
\end{corollary}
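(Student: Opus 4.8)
The plan is to upgrade the \emph{uniform} disk of holomorphy of radius $R$ around $\tau=0$ coming from Theorem~\ref{th:reg} and Proposition~\ref{prop:R} into holomorphy on the whole strip, by analytic continuation in $\tau$ — exactly the argument of \cite{antonana2020}, but now fed with the improved radius.

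First I would record the coefficientwise bounds. Theorem~\ref{th:reg} gives $Q_i-Q_j\unlhd\|q_i^0-q_j^0\|\,\xi$ and $V_i-V_j\unlhd\|v_i^0-v_j^0\|+s(q^0,v^0)\,M_{ij}(q^0)\,\zeta$, while Theorem~\ref{th:reg2} majors $Q_i-q_i^0$ and $V_i-v_i^0$ by series built from $\xi-1$ and $\zeta$. Since Proposition~\ref{prop:R} asserts that $\xi$ and $\zeta$ have radius of convergence exactly $R$, each $Q_i$ and $V_i$ is dominated coefficientwise by a power series convergent on $|\tau|<R$; hence its Taylor series at $\tau=0$ converges on $D_R(0)$ and defines there a holomorphic $\C^{6N}$-valued function which (by matching the recursion, and by real-analyticity on the region where it stays regular) solves \eqref{eq:Nbodytau}--\eqref{eq:icondtau}. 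The crucial point is that $R$ depends neither on the regular initial data nor on the masses, and that system \eqref{eq:Nbodytau} is autonomous; so the same holds for the solution expanded about any $\tau_0$ at which it is regular, i.e.\ it extends holomorphically to $D_R(\tau_0)$.

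Next I would show that the real solution $\tau\mapsto(Q(\tau),V(\tau))$ is defined and regular for every $\tau\in\R$. Differentiating along \eqref{eq:Nbodytau}, $\frac{d}{d\tau}\|Q_i-Q_j\|^2=2\,s(Q,V)\,\langle Q_i-Q_j,\,V_i-V_j\rangle$, and the pointwise inequality $s(Q,V)\le\|Q_i-Q_j\|/\|V_i-V_j\|$ from \eqref{eq:ss^2} yields $\bigl|\frac{d}{d\tau}\|Q_i-Q_j\|^2\bigr|\le 2\|Q_i-Q_j\|^2$, hence the Gr\"onwall lower bound $\|Q_i(\tau)-Q_j(\tau)\|\ge\|q_i^0-q_j^0\|\,e^{-|\tau|}$ on the maximal interval of existence: no two bodies collide in finite fictitious time. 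Combining this with the a~priori bounds of Theorem~\ref{th:reg2} — equivalently, with the boundedness of $(Q,V)$ on compact subsets of any disk of holomorphy furnished by the previous step — a routine continuation argument, recentring the radius-$R$ disk at points approaching a putative finite endpoint of the interval of existence, rules out finite blow-up and shows the solution is real-analytic and regular on all of $\R$.

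Finally I would glue: by the two previous steps, for every $\tau_0\in\R$ the solution extends holomorphically to $D_R(\tau_0)$, and by the identity theorem any two such extensions agree on the (nonempty, real) overlap of their domains, so they combine into a single holomorphic extension of $(Q,V)$ on $\bigcup_{\tau_0\in\R} D_R(\tau_0)=\{\tau\in\C\ :\ |\mathrm{Im}(\tau)|<R\}$, with $R=0.0839968103939379$ as in Proposition~\ref{prop:R}. The one genuinely non-formal point — and the step I expect to require the most care — is the global regularity of the real solution used above: the Gr\"onwall estimate disposes of collisions cleanly, but arranging the continuation bootstrap so that it does not become circular (boundedness on $(-R,R)$, recentre, extend, iterate) is the delicate part, already settled in \cite{antonana2020} and here only supplied with the sharper radius $R$.
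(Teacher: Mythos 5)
Your argument is correct and follows essentially the same route as the paper (which delegates the details to \cite{antonana2020}): the data- and mass-independent radius $R$ from Theorem~\ref{th:reg} and Proposition~\ref{prop:R} yields a disk of holomorphy of radius $R$ about every regular real $\tau_0$, global regularity of the real solution in $\tau$ rules out collisions and finite-time breakdown, and the recentred disks glue by the identity theorem into the strip $\{|\mathrm{Im}(\tau)|<R\}$. The only comment is that the continuation step you flag as delicate is not actually circular: because $R$ is uniform in the (arbitrarily large) state and the majorant bound forces $\xi<\sqrt{2}$, so that separations on each recentred disk stay above $(2-\sqrt{2})$ times their value at the centre, every recentring extends existence by the fixed amount $R$ without any a priori boundedness, and your Gr\"onwall collision bound plus this uniformity already close the argument.
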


\begin{remark}
As pointed out in~\cite{antonana2020}, Corollary~\ref{cor:strip} implies that the solution of  (\ref{eq:Nbodytau}) with regular initial values admits a globally convergent series expansion in powers of a new variable $\sigma$, related to $\tau$ with
the conformal mapping
\begin{equation*}
\tau \mapsto \sigma =\frac{\exp(\frac{\pi}{2\beta} \tau)-1}{\exp(\frac{\pi}{2\beta} \tau)+1}.
\end{equation*}
that maps the strip (\ref{eq:strip}) into the unit disk. 
%Indeed, this
%gives the solution of the $N$-body as a holomorphic function of the complex variable $\sigma$ in the unit disk, which thus admits a globally convergent series expansion in powers of $\sigma$.
This is closely related to Sundman's result~\cite{sundman} for the 3-body problem as well as Wang's results~\cite{wang} for the general case of $N$-body problems. It is worth emphasizing that, in contrast with both Sundman's and Wang's solutions,  our approach remains valid in the limit where  $\displaystyle \min_{1\leq i  \leq N} m_i/M \to 0$ with $M=\sum_{1 \leq i \leq N} m_i$.
\end{remark}

\section{Discretization of the time-renormalized $N$-body equations}
\label{sect:DottrNbe}

We now consider the implicit mid-point rule discretization of the equations (\ref{eq:Nbodytau}). 
 The implicit midpoint rule gives, for small enough values of the step-zie $h$ in $\tau$, an approximation of the solution  of the initial value problem (\ref{eq:Nbodytau})--(\ref{eq:icondtau}).  We want to obtain majorants of the power series expansions in powers of the step-size $h$  of the local errors of the implicit midpoint approximation
of the solution of (\ref{eq:Nbodytau})--(\ref{eq:icondtau}). The main goal of the present section is to show that, with the time-renormalization function (\ref{eq:s}), it makes sense discretizing the time-renormalized equations with constant time-steps, without drastically degrading the accuracy during close encounters.

In the present section, it is always assumed that $s(Q,V)$ is given by (\ref{eq:s}).
Let  
 $$
 (\tilde Q, \tilde V) = (\tilde Q_1,\ldots, \tilde Q_N,\tilde V_1,\ldots, \tilde V_N) \in \R^{6N}[[\Delta\tau]]
 $$
 be the power series expansion of the implicit midpoint  approximation, and consider $(\hat Q, \hat V) = \frac12 \, (q^0 + \tilde Q, v^0 + \hat V)$. Then, for $1\leq i \leq N$ it holds  that
 \begin{equation}
 \label{eq:hatQV}
 \begin{split}
\hat Q_i &= q_i^0 + \frac{\Delta\tau}{2} \, s(\hat Q, \hat V) \hat V_i, \\
\hat V_i &= v_i^0 + \frac{\Delta\tau}{2} \, s(\hat Q, \hat V) g_i(\hat Q), 
\end{split}
\end{equation}
and
 \begin{equation}
 \label{eq:tildeQV}
 \begin{split}
\tilde Q_i &= q_i^0 + \Delta\tau\, s(\hat Q, \hat V) \hat V_i, \\
\tilde V_i &= v_i^0 + \Delta\tau \, s(\hat Q, \hat V) g_i(\hat Q).
\end{split}
\end{equation}

\begin{lemma}
  \label{lem:5}
Let $\hat{\mathcal{B}}$ be the set of $(\xi,\zeta) \in   (1 + \Delta\tau \R[[\Delta\tau]]) \times \Delta\tau \R[[\Delta\tau]]$ such that
%pairs of power series (\ref{eq:xizeta}) satisfying 
for $1 \leq i < j \leq N$
\begin{align*}
 \hat  Q_i -\hat  Q_j &\tleq \|q_i^0 - q_j^0\|\, \xi, \\
\hat  V_i - \hat V_j &\tleq \|v_i^0 - v_j^0\| + s(q^0,v^0) \, M_{ij}(q^0)\,  \zeta,
\end{align*}
and consider the operator
\begin{equation*}
%\label{eq:hatPsi}
\begin{array}{crcl}
\widehat \Psi:&   (1 + \Delta\tau \R[[\Delta\tau]]) \times \Delta\tau \R[[\Delta\tau]] &\rightarrow&  \R[[\Delta\tau]] \times \R[[\Delta\tau]]\\
  & (\xi,\zeta)  &\mapsto&  \left(1 + \frac{\Delta\tau}{2}\,  (2-\chi(\xi,\zeta))^{-1/2}  \, (1+\zeta),
                         \frac{\Delta\tau}{2}\,  \frac{ (2-\chi(\xi,\zeta))^{-1/2} \, \xi}{(2-\xi^2)^{3/2}} \right).
\end{array}
\end{equation*}
Then $\widehat{\Psi}(\hat{\mathcal{B}}) \subset  \hat{\mathcal{B}}$.
\end{lemma}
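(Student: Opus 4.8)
The plan is to mimic the proof of Lemma~\ref{lem:4} almost line for line, the only structural changes being that the integral operator $\int$ is everywhere replaced by multiplication by $\frac{\tau}{2}$, and that Lemma~\ref{lem:3} is invoked for the series $(\hat Q,\hat V)$ rather than for $(Q,V)$.

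First I would observe that Lemma~\ref{lem:3} is a purely formal statement about majorant series: its proof only uses the hypotheses $(\ref{eq:Qijtleq})$--$(\ref{eq:Vijtleq})$, the rules of Proposition~\ref{prop:majo} and $(\ref{eq:ffb})$, and never appeals to the equations $(\ref{eq:Nbodytau})$. Hence, for $(\xi,\zeta)\in\hat{\mathcal B}$, the very same argument applied to $(\hat Q,\hat V)$ yields, for $1\le i<j\le N$,
$$
g_i(\hat Q)\tleq K_i(q^0)\,\frac{\xi}{(2-\xi^2)^{3/2}},
\qquad
s(\hat Q,\hat V)\tleq s(q^0,v^0)\,\bigl(2-\chi(\xi,\zeta)\bigr)^{-1/2},
$$
and, by $(\ref{eq:add})$ applied to $g_i(\hat Q)$ and $-g_j(\hat Q)$, also $\;g_i(\hat Q)-g_j(\hat Q)\tleq M_{ij}(q^0)\,\xi\,(2-\xi^2)^{-3/2}$.

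Next I would substitute these bounds into the fixed-point relations $(\ref{eq:hatQV})$. Subtracting the $i$-th and $j$-th of those relations and applying $(\ref{eq:prod})$ (using that the series $\frac{\tau}{2}$ majorizes itself) together with the elementary inequalities $(\ref{eq:ss^2})$ on $s(q^0,v^0)$ gives
$$
\hat Q_i-\hat Q_j\tleq \|q_i^0-q_j^0\|\left(1+\frac{\tau}{2}\,\bigl(2-\chi(\xi,\zeta)\bigr)^{-1/2}\,(1+\zeta)\right),
$$
$$
\hat V_i-\hat V_j\tleq \|v_i^0-v_j^0\|+s(q^0,v^0)\,M_{ij}(q^0)\,\frac{\tau}{2}\,\frac{\bigl(2-\chi(\xi,\zeta)\bigr)^{-1/2}\,\xi}{(2-\xi^2)^{3/2}},
$$
which are precisely the statements that the two components of $\widehat\Psi(\xi,\zeta)$ majorize $\frac{\hat Q_i-\hat Q_j}{\|q_i^0-q_j^0\|}$ and $\frac{\hat V_i-\hat V_j-(v_i^0-v_j^0)}{s(q^0,v^0)M_{ij}(q^0)}$, respectively. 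Since $\chi(1,0)=1$, the series $\bigl(2-\chi(\xi,\zeta)\bigr)^{-1/2}$ and $\xi(2-\xi^2)^{-3/2}$ are well defined with nonvanishing constant term, so the factor $\frac{\tau}{2}$ puts the first component of $\widehat\Psi(\xi,\zeta)$ in $1+\tau\R[[\tau]]$ and the second in $\tau\R[[\tau]]$; combined with the two majorations above this gives $\widehat\Psi(\xi,\zeta)\in\hat{\mathcal B}$.

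All of the above is routine bookkeeping and I do not expect a genuine obstacle. The only point requiring a little care is the first step — recognizing that Lemma~\ref{lem:3} transfers verbatim to $(\hat Q,\hat V)$ — and this is clear because, after differencing, $(\ref{eq:hatQV})$ has the same algebraic shape as $(\ref{eq:Qi-Qj})$--$(\ref{eq:Vi-Vj})$ with the operator $\int(\cdot)$ replaced by $\frac{\tau}{2}(\cdot)$, and majorant algebra treats these two operations in exactly parallel fashion.
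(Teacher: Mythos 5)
Your proposal is correct and follows essentially the same route as the paper's own (sketched) proof: difference the midpoint relations \eqref{eq:hatQV}, apply Lemma~\ref{lem:3} with $(Q,V)$ replaced by $(\hat Q,\hat V)$ (justified, as you note, because that lemma is purely formal and never uses the differential equations), and then invoke the bounds \eqref{eq:ss^2}, with $\int$ replaced throughout by multiplication by $\tfrac{\tau}{2}$. Your final majorants in fact match the stated operator $\widehat\Psi$ exactly, whereas the paper's sketch contains small typos (it writes $(1+\xi)$ instead of $(1+\zeta)$ and $(2-\xi^2)^{1/2}$ instead of $(2-\xi^2)^{3/2}$), so no gap remains.
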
 
\begin{proof} We only sketch the proof as it is analogous to that of Lemma~\ref{lem:4}. Expressing relative positions and velocities between bodies, we have for all $1 \leq i < j \leq N$
\begin{align*}
\hat Q_i - \hat Q_j & = q_i^0 - q_j^0 + \frac{\Delta\tau}{2} \, s(\hat Q, \hat V) \left( \hat V_i - V_j \right), \\
\hat V_i - \hat V_j &= v_i^0 - v_j^0 + \frac{\Delta\tau}{2} \, s(\hat Q, \hat V) \left(g_i(\hat Q)-g_j(\hat Q)\right)
\end{align*}
so that, upon using Lemma \ref{lem:3} with $(Q,V)$ replaced by $(\hat Q, \hat V)$, we immediately obtain 
\begin{align*}
\hat Q_i - \hat Q_j &  \tleq \|q_i^0 - q_j^0\| + \frac{\Delta\tau}{2} \, s(q^0, v^0) (2-\chi(\xi,\zeta))^{-1/2} \Big(  \|v_i^0 - v_j^0\| + s(q^0, v^0) M_{ij}(q^0)\Big), \\
\hat V_i - \hat V_j & \tleq \|v_i^0 - v_j^0\| + \frac{\Delta\tau}{2} \, s(q^0, v^0) (2-\chi(\xi,\zeta))^{-1/2} \left(K_i(q^0) +K_j(q^0) ) \right) \frac{\xi}{(2-\xi^2)^{1/2}}.
\end{align*}
It then follows from the bounds in (\ref{eq:ss^2}) that 
\begin{align*}
\hat Q_i - \hat Q_j &  \tleq \|q_i^0 - q_j^0\| \left( 1+ \frac{\Delta\tau}{2} \,  (2-\chi(\xi,\zeta))^{-1/2} (1+\xi) \right), \\
\hat V_i - \hat V_j & \tleq \|v_i^0 - v_j^0\| + s(q^0, v^0) M_{ij}(q^0)\left( \frac{\Delta\tau}{2} \, (2-\chi(\xi,\zeta))^{-1/2} \frac{\xi}{(2-\xi^2)^{1/2}} \right).
\end{align*}
This proves that $\hat \Psi (\mathcal{B}) \subset \mathcal{B}$.
\end{proof}

\begin{theorem}
\label{th:hatreg} 
For $1\leq i < j \leq N$, 
\begin{align}
\label{eq:hatQijtleq}
 \hat  Q_i -\hat  Q_j &\tleq \|q_i^0 - q_j^0\|\, \hat \xi, \\
  \label{eq:hatVijtleq}
\hat  V_i - \hat V_j &\tleq \|v_i^0 - v_j^0\| + s(q^0,v^0) \, M_{ij}(q^0)\,  \hat \zeta,
\end{align}
where $(\hat \xi, \hat \zeta) \in  (1+\R[[\Delta\tau]]) \times \Delta\tau \R[[\Delta\tau]]$ is the unique fixed point of $\widehat{\Psi}$.
\end{theorem}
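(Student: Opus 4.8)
\textbf{Proof plan for Theorem~\ref{th:hatreg}.}
The plan is to mimic exactly the proof of Theorem~\ref{th:reg}, replacing the operator $\Psi$ by $\widehat\Psi$ and the set $\mathcal{B}$ by $\hat{\mathcal B}$, and invoking Lemma~\ref{lem:5} in place of Lemma~\ref{lem:4}. First I would observe that, since $\tilde Q$, $\tilde V$ are defined as the unique formal power series solution of the implicit midpoint equations, the series $\hat Q=\frac12(q^0+\tilde Q)$, $\hat V=\frac12(v^0+\tilde V)$ satisfy the fixed-point relations~(\ref{eq:hatQV}), and these together with (\ref{eq:ss^2}) put the pair of ``relative'' series $(\hat\xi^{[0]},\hat\zeta^{[0]})$ defined by
\begin{equation*}
\hat\xi^{[0]}_m = \max_{i\neq j}\frac{\|(\hat Q_i-\hat Q_j)_m\|}{\|q_i^0-q_j^0\|},\qquad
\hat\zeta^{[0]}_m = \max_{i\neq j}\frac{\|(\hat V_i-\hat V_j)_m\|}{s(q^0,v^0)\,M_{ij}(q^0)}
\end{equation*}
into the set $\hat{\mathcal B}$.

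Next I would apply Lemma~\ref{lem:5} iteratively: setting $(\hat\xi^{[m]},\hat\zeta^{[m]})=\widehat\Psi(\hat\xi^{[m-1]},\hat\zeta^{[m-1]})$ for $m\geq 1$, each $(\hat\xi^{[m]},\hat\zeta^{[m]})\in\hat{\mathcal B}$, so~(\ref{eq:hatQijtleq})--(\ref{eq:hatVijtleq}) hold with $(\hat\xi,\hat\zeta)$ replaced by any iterate. The crucial structural observation, just as for $\Phi$ in Theorem~\ref{th:phys}, is that $\widehat\Psi$ is Noetherian: because of the explicit factor $\tau$ (resp.\ $\tau/2$) multiplying the integrand-free expressions in both components, the coefficient of $\tau^k$ in $\widehat\Psi(\xi,\zeta)$ depends polynomially only on the coefficients $\xi_l,\zeta_l$ with $l\leq k-1$. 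Hence the sequence $\{(\hat\xi^{[m]},\hat\zeta^{[m]})\}_{m\in\N}$ is coefficientwise ultimately constant (by the same minimal-index contradiction argument used in Theorem~\ref{th:phys}), and its limit $(\hat\xi,\hat\zeta)\in\hat{\mathcal B}$ is the unique fixed point of $\widehat\Psi$. Passing to the limit in~(\ref{eq:hatQijtleq})--(\ref{eq:hatVijtleq}) for the iterates yields the claimed majorant relations for this fixed point.

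The only genuinely new point compared with Theorem~\ref{th:reg} — and the main thing to check carefully — is the Noetherian property of $\widehat\Psi$: in Theorem~\ref{th:reg} regularity at order $k$ came from the smoothing effect of $\int$, whereas here it must come from the prefactor $\tau$, which also lowers the order by one but requires noting that the functions $(2-\chi(\xi,\zeta))^{-1/2}$ and $\xi/(2-\xi^2)^{3/2}$ are well-defined formal power series (their constant terms are $\chi(1,0)=(2-1)^{-1}\cdot(0+0+(2-1)^{-1/2})=1$, so $2-\chi(1,0)=1>0$, and $2-1=1>0$) with coefficient of $\tau^k$ a polynomial in $\xi_l,\zeta_l$, $l\leq k$. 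Once this is in place, the existence and uniqueness of the fixed point, and the verification that it lies in $(1+\tau\R[[\tau]])\times\tau\R[[\tau]]$ with nonnegative coefficients (so in $\R_+[[\tau]]$), are routine and parallel the earlier arguments verbatim.
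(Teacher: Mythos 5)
Your proposal is correct and follows essentially the same route as the paper: iterate $\widehat\Psi$ starting from the coefficientwise-optimal majorant pair, use Lemma~\ref{lem:5} to keep the iterates in $\hat{\mathcal B}$, and invoke the Noetherian structure to get coefficientwise stabilization to the unique fixed point, which inherits the majorant relations. The paper merely asserts that $\widehat\Psi$ is ``clearly Noetherian,'' whereas you justify it via the explicit $\tau/2$ prefactor and the check that $2-\chi(1,0)=1>0$; this is a welcome added detail, not a deviation.
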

\begin{proof}
Again, we only sketch the proof, which is similar to the proof of Theorem \ref{th:reg}. The operator $\hat \Psi$ being clearly Noetherian, the sequence 
$$
(\xi^{[m]},\zeta^{[m]}) = \hat \Psi \left( \xi^{[m-1]},\zeta^{[m-1]}\right), \quad m=1, \ldots, 
$$
where
$$
\xi^{[0]} = 1 + \sum_{k\geq 1} \xi^{[0]} _k\, \Delta\tau^k  \in  \R_+[[\Delta\tau]] \quad \mbox{ and } \quad 
\zeta^{[0]}  = \sum_{k\geq 1} \zeta^{[0]} _k\, \Delta\tau^k  \in  \R_+[[\Delta\tau]]
$$ 
with 
\begin{equation*}
\xi^{[0]} _m =   \frac{\|(\hat Q_i - \hat Q_j)_m\|}{\|q_i^0 -  q_j^0\|} \quad \mbox{ and } \quad 
\zeta^{[0]} _m =   \frac{\|(\hat V_i - \hat V_j)_m\|}{s(q^0,v^0) \, M_{ij}(q^0)},
\end{equation*}
converges in $\mathcal{B}$ to a limit $(\xi^{[\infty]},\zeta^{[\infty]})$ (owing to previous lemma). This limit is the unique series in powers of $\Delta\tau$ satisfying  the equations
\begin{align}
\hat \xi &= 1 + \frac{\Delta\tau}{2}\,  (2-\chi(\hat\xi,\hat\zeta))^{-1/2}  \, (1+\hat\zeta), \label{eq:hxi}\\
\hat\zeta & = \frac{\Delta\tau}{2}\,  \frac{ (2-\chi(\hat \xi,\hat \zeta))^{-1/2} \, \hat \xi}{(2-\hat \xi^2)^{3/2}},
\end{align}
satisfying $\left.(\hat \xi(\Delta\tau),\hat \zeta(\Delta\tau))\right|_{\Delta\tau=0} = (1,0)$.
\end{proof}
\begin{remark}
It is not difficult to obtain the following relation
$$
\hat \zeta =\frac12 \left( \left( 1-\frac{4 \hat \xi (1-\hat\xi)}{(2-\hat \xi^2)^{3/2}} \right)^{1/2}-1\right)
$$
which in turn, can be substituted into (\ref{eq:hxi}) for instance, to obtain an algebraic equation involving only $\hat \xi$ and $\Delta\tau$. It is then possible to solve this equation numerically in order to estimate the radius of convergence of the series $\hat \xi(\Delta\tau)$. 
\end{remark}
Next theorem can be proven along the same lines as Theorem~\ref{th:reg2}.
\begin{theorem}
\label{th:reg_midpoint}
For $1\leq i \leq N$,
\begin{align*}
\tilde V_i - v_i^0 & \tleq s(q^0,v^0) \, K_i(q^0) \, \hat  \zeta, \\
\tilde Q_i - q_i^0 & \tleq \max\left(s(q^0,v^0) \, \|v_i^0\|,  s(q^0,v^0)^2 \, K_i(q^0) \right)  \, (\hat \xi-1) . 
\end{align*}\end{theorem}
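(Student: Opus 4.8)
The plan is to follow the proof of Theorem~\ref{th:reg2} almost verbatim, replacing the integral identities $V_i - v_i^0 = \int s(Q,V)\, g_i(Q)$ and $Q_i - q_i^0 = \int s(Q,V)\, V_i$ by the explicit one-step relations of (\ref{eq:tildeQV}),
\[
\tilde V_i - v_i^0 = \tau\, s(\hat Q,\hat V)\, g_i(\hat Q), \qquad
\tilde Q_i - q_i^0 = \tau\, s(\hat Q,\hat V)\, \hat V_i ,
\]
and $\int$ by multiplication by the monomial $\tau$. The key preliminary observation is that Theorem~\ref{th:hatreg} furnishes precisely the hypotheses (\ref{eq:Qijtleq})--(\ref{eq:Vijtleq}) needed to invoke Lemma~\ref{lem:3}, but now for the stage values $(\hat Q,\hat V)$ and with the series $(\hat\xi,\hat\zeta)$ in the role of $(\xi,\zeta)$. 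Hence Lemma~\ref{lem:3}, applied at $(\hat Q,\hat V)$, yields
\[
g_i(\hat Q)\ \tleq\ K_i(q^0)\,\frac{\hat\xi}{(2-\hat\xi^2)^{3/2}}, \qquad
s(\hat Q,\hat V)\ \tleq\ s(q^0,v^0)\,\bigl(2-\chi(\hat\xi,\hat\zeta)\bigr)^{-1/2}.
\]

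For the velocity estimate I would combine these two majorants with $\tau$ through the product rule (\ref{eq:prod}) of Proposition~\ref{prop:majo}; up to the constant that relates the stage step $\tau/2$ to the full step $\tau$, the $\tau$-dependent factor so obtained is exactly the right-hand side of the fixed-point relation defining $\hat\zeta$ in Theorem~\ref{th:hatreg}, so it collapses to a multiple of $\hat\zeta$. This is the discrete counterpart of the step in the proof of Theorem~\ref{th:reg2} where $\int \xi\,(2-\chi)^{-1/2}(2-\xi^2)^{-3/2}$ was identified with $\zeta$ through its ODE. For the position estimate I would then mimic Theorem~\ref{th:reg2}: write $\hat V_i = v_i^0 + (\hat V_i - v_i^0)$ in the formula for $\tilde Q_i - q_i^0$, majorize the constant part by $\|v_i^0\|$ and the remainder by the velocity majorant just established, split the product with (\ref{eq:add}) into a term carrying $s(q^0,v^0)\,\|v_i^0\|$ and a term carrying $s(q^0,v^0)^2 K_i(q^0)\,\hat\zeta$, factor out the maximum of these two constants, and recognize the leftover series $\tau\,(2-\chi(\hat\xi,\hat\zeta))^{-1/2}(1+\hat\zeta)$ — again up to the step-size constant — as the defining relation for $\hat\xi - 1$. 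Assembling the pieces produces the two announced majorants.

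Since every ingredient (Lemma~\ref{lem:3}, Theorem~\ref{th:hatreg}, Proposition~\ref{prop:majo}) is already in place, I do not expect any conceptual obstacle; the one point that genuinely requires care is the bookkeeping of step-size factors. The slope of the midpoint rule is sampled at the stage point $(\hat Q,\hat V)$, which is why Lemma~\ref{lem:3} and Theorem~\ref{th:hatreg} must be used at $(\hat Q,\hat V)$ rather than at $(\tilde Q,\tilde V)$; meanwhile the increments to $(\tilde Q,\tilde V)$ carry a full $\tau$ while the fixed-point equations for $(\hat\xi,\hat\zeta)$ carry $\tau/2$, and these must be tracked consistently so that the combinations left after applying the product rule match the defining relations of $\hat\xi$ and $\hat\zeta$ (invoking, as in the proof of Lemma~\ref{lem:4}, the bounds (\ref{eq:ss^2}) on $s(q^0,v^0)$ wherever a factor such as $s(q^0,v^0)^2 K_i(q^0)$ must be absorbed).
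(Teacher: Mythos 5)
Your plan follows exactly the route the paper intends: the paper gives no proof beyond the remark that the result ``can be proven along the same lines as Theorem~\ref{th:reg2}'', and your ingredients (the one-step relations (\ref{eq:tildeQV}), Theorem~\ref{th:hatreg} to place $(\hat Q,\hat V)$ under the hypotheses of Lemma~\ref{lem:3} with $(\hat\xi,\hat\zeta)$ in place of $(\xi,\zeta)$, the rules of Proposition~\ref{prop:majo}, the bounds (\ref{eq:ss^2}), and the fixed-point equations of $\widehat\Psi$) are the right ones.

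However, the bookkeeping you defer is not a harmless detail: carried out, it does not land on the announced constants. Since $\widehat\Psi$ carries the half step, the fixed point satisfies $\hat\zeta=\frac{\tau}{2}\,(2-\chi(\hat\xi,\hat\zeta))^{-1/2}\,\hat\xi\,(2-\hat\xi^{2})^{-3/2}$ and $\hat\xi-1=\frac{\tau}{2}\,(2-\chi(\hat\xi,\hat\zeta))^{-1/2}\,(1+\hat\zeta)$, whereas the updates carry the full step: $\tilde V_i-v_i^0=\tau\, s(\hat Q,\hat V)\,g_i(\hat Q)=2\,(\hat V_i-v_i^0)$ and $\tilde Q_i-q_i^0=2\,(\hat Q_i-q_i^0)$. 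Your computation therefore yields $\tilde V_i-v_i^0\unlhd 2\,s(q^0,v^0)\,K_i(q^0)\,\hat\zeta$ and $\tilde Q_i-q_i^0\unlhd 2\max\left(s(q^0,v^0)\,\|v_i^0\|,\,s(q^0,v^0)^2\,K_i(q^0)\right)(\hat\xi-1)$; equivalently, it proves the stated majorants verbatim for the stage quantities $\hat V_i-v_i^0$ and $\hat Q_i-q_i^0$, but not for $\tilde V_i,\tilde Q_i$. The ``multiple of $\hat\zeta$'' in your argument is $2\hat\zeta$, and $2\hat\zeta\unlhd\hat\zeta$ is false because the coefficients are nonnegative; the factor can only be absorbed by rescaling the argument, e.g. $2\hat\zeta(\tau)\unlhd\sum_{k\geq 1}\hat\zeta_k\,(2\tau)^k$, which is the shape appearing in Theorem~\ref{th:hatreg2} but changes the present statement. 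So either you keep the extra factor $2$ in both bounds, or you need an additional argument (absent from your proposal, and not indicated in the paper, which gives no proof to compare against) explaining why it disappears; as written, your concluding claim that the pieces assemble into the two announced majorants is not justified.
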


\begin{remark}
We have numerically estimated the radius of convergence $\hat R$ of the power series $\hat \xi$ and $\hat \zeta$ to obtain  $\hat R \approx 0.094790093$.
\end{remark}

We now consider the discretization of (\ref{eq:Nbodytau}) by a $s$-stage Runge-Kutta scheme with Butcher tableau
\begin{equation}
\label{eq:BT}
\begin{array}{|c}
A \\ \hline
b^T
\end{array}
\end{equation}
where  $A \in \R^{s\times s}$ and $b \in \R^s$. An straightforward generalization of Lemma~\ref{lem:5} and Theorem~\ref{th:hatreg} to Runge-Kutta schemes allows proving the following generalization of Theorem~\ref{th:reg_midpoint}.
\begin{theorem}
Let  
 $$
 (\tilde Q, \tilde V) = (\tilde Q_1,\ldots, \tilde Q_N,\tilde V_1,\ldots, \tilde V_N) \in \R^{6N}[[\Delta\tau]]
 $$
 be the power series expansion of the approximation of the solution $(Q(\Delta\tau),V(\Delta\tau))$ of the initial value problem (\ref{eq:Nbodytau})--(\ref{eq:icondtau}) obtained by applying one step of the Runge-Kutta scheme with Butcher tableau (\ref{eq:BT}). 
\label{th:hatreg2}
For $1\leq i \leq N$,
\begin{align*}
\tilde V_i - v_i^0 & \tleq s(q^0,v^0) \, K_i(q^0) \, \|b\|_{\infty}\, \sum_{k=1}^\infty \hat  \zeta_k\, (2 \, \|A\|_{\infty} \, \Delta\tau)^k, \\
\tilde Q_i - q_i^0 & \tleq \max\left(s(q^0,v^0) \, \|v_i^0\|,  s(q^0,v^0)^2 \, K_i(q^0) \right)  \, \|b\|_{\infty}  \, \sum_{k=1}^\infty \hat  \xi_k\, (2 \, \|A\|_{\infty} \, \Delta\tau)^k.
\end{align*}
\end{theorem}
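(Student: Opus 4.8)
The plan is to mimic the structure that was used for the implicit midpoint rule (Lemma~\ref{lem:5}, Theorem~\ref{th:hatreg}, Theorem~\ref{th:reg_midpoint}), replacing the single midpoint stage by the $s$ internal stages of the Runge-Kutta scheme~(\ref{eq:BT}). First I would write down the stage equations: if $(\tilde Q,\tilde V)$ is the one-step Runge-Kutta image of $(q^0,v^0)$, then for each internal stage $\ell=1,\dots,s$ there are intermediate values $(Q^{(\ell)},V^{(\ell)})$ satisfying
\begin{align*}
Q^{(\ell)}_i &= q_i^0 + \tau \sum_{m=1}^s a_{\ell m}\, s(Q^{(m)},V^{(m)})\, V^{(m)}_i,\\
V^{(\ell)}_i &= v_i^0 + \tau \sum_{m=1}^s a_{\ell m}\, s(Q^{(m)},V^{(m)})\, g_i(Q^{(m)}),
\end{align*}
and then $\tilde Q_i = q_i^0 + \tau \sum_{m} b_m\, s(Q^{(m)},V^{(m)})\, V^{(m)}_i$, similarly for $\tilde V_i$. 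The key observation is that $|a_{\ell m}|\le \|A\|_\infty$ for all $\ell,m$ after summing over $m$, and $\sum_m |b_m|\le\|b\|_\infty$; this is where the factors $2\|A\|_\infty\tau$ and $\|b\|_\infty$ will enter.

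Next I would set up a Noetherian fixed-point operator on an $s$-tuple of pairs $(\xi^{(\ell)},\zeta^{(\ell)})$, one pair per stage, each majoring the relative stage positions and velocities in the sense of~(\ref{eq:hatQijtleq})--(\ref{eq:hatVijtleq}) but with $\hat\xi,\hat\zeta$ replaced by $\xi^{(\ell)},\zeta^{(\ell)}$. Applying Lemma~\ref{lem:3} at each stage and using the bounds $\sum_m |a_{\ell m}|\, X^{(m)} \unlhd \|A\|_\infty \sum_m X^{(m)}$ componentwise, one gets, for each $\ell$,
\begin{align*}
\xi^{(\ell)} &\mapsto 1 + \|A\|_\infty\, \tau \sum_{m=1}^s (2-\chi(\xi^{(m)},\zeta^{(m)}))^{-1/2}(1+\zeta^{(m)}),\\
\zeta^{(\ell)} &\mapsto \|A\|_\infty\, \tau \sum_{m=1}^s \frac{(2-\chi(\xi^{(m)},\zeta^{(m)}))^{-1/2}\,\xi^{(m)}}{(2-(\xi^{(m)})^2)^{3/2}}.
\end{align*}
The crucial point is that the right-hand side does not depend on $\ell$, so the fixed point has all stages equal, $(\xi^{(\ell)},\zeta^{(\ell)})=(\xi^*,\zeta^*)$, and then the common value satisfies exactly the midpoint-type equations~(\ref{eq:hxi}) after the rescaling $\tau \mapsto 2\|A\|_\infty\tau$: indeed $\hat\xi,\hat\zeta$ solve those equations with coefficient $\tfrac{\tau}{2}$, and here the coefficient is $s\cdot\|A\|_\infty\tau$ with $s$ absorbed by the sum collapsing — more precisely one checks that $\xi^*(\tau)=\hat\xi(2\|A\|_\infty\tau)$ and $\zeta^*(\tau)=\hat\zeta(2\|A\|_\infty\tau)$, i.e. $\xi^*_k = \hat\xi_k\,(2\|A\|_\infty)^k$ and likewise for $\zeta^*$. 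This gives the majorants $\sum_k \hat\xi_k (2\|A\|_\infty\tau)^k$ and $\sum_k \hat\zeta_k(2\|A\|_\infty\tau)^k$ appearing in the statement.

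Finally, to get the bounds on $\tilde V_i - v_i^0$ and $\tilde Q_i - q_i^0$ I would proceed exactly as in Theorem~\ref{th:reg_midpoint}: from $\tilde V_i - v_i^0 = \tau \sum_m b_m\, s(Q^{(m)},V^{(m)})\, g_i(Q^{(m)})$, apply Lemma~\ref{lem:3} with $(\xi^*,\zeta^*)$, use $\sum_m|b_m|\le\|b\|_\infty$ and the bound $s(q^0,v^0)^2 M_{ij}(q^0)\le \|q_i^0-q_j^0\|$ from~(\ref{eq:ss^2}) together with $K_i(q^0)/\nu(q^0)\le\min_j\|q_i^0-q_j^0\|$, to land on $s(q^0,v^0)K_i(q^0)\,\|b\|_\infty\sum_k\hat\zeta_k(2\|A\|_\infty\tau)^k$; the position bound is obtained the same way using $\max(s\|v_i^0\|,s^2K_i(q^0))$ and $(\xi^*-1)$. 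The main obstacle I anticipate is purely bookkeeping: justifying carefully that the inequality $\sum_m |a_{\ell m}| X^{(m)} \unlhd \|A\|_\infty \sum_m X^{(m)}$ is valid in the majorant order (it is, since each $X^{(m)}\in\R_+[[\tau]]$ and $\unlhd$ is preserved under non-negative linear combinations by~(\ref{eq:add}) and~(\ref{eq:prod})), and then verifying the rescaling identity $\xi^*_k = \hat\xi_k(2\|A\|_\infty)^k$ rigorously — which follows because both sides satisfy the same Noetherian recursion once $\tau$ is rescaled, so a one-line induction closes it. Everything else is a routine transcription of the midpoint argument.
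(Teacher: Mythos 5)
Your overall plan (stage equations, a Noetherian fixed-point operator majorizing the stages, then transcribing Theorem~\ref{th:reg_midpoint}) is precisely the ``straightforward generalization'' of Lemma~\ref{lem:5} and Theorem~\ref{th:hatreg} that the paper alludes to, but the step where $\|A\|_\infty$ enters is wrong as written, and it breaks the stated bound. You bound $\sum_m |a_{\ell m}|\,X^{(m)} \unlhd \|A\|_\infty \sum_m X^{(m)}$ entrywise and keep the sum over stages inside your operator. At the symmetric fixed point all pairs $(\xi^{(m)},\zeta^{(m)})$ coincide, so the sum of $s$ identical terms is $s$ times one term: your fixed-point equations read
\begin{equation*}
\xi^* = 1 + s\,\|A\|_\infty\,\tau\,(2-\chi(\xi^*,\zeta^*))^{-1/2}(1+\zeta^*),\qquad
\zeta^* = s\,\|A\|_\infty\,\tau\,\frac{(2-\chi(\xi^*,\zeta^*))^{-1/2}\,\xi^*}{\bigl(2-(\xi^*)^2\bigr)^{3/2}},
\end{equation*}
whose power series solution is $\hat\xi(2s\|A\|_\infty\tau)$, $\hat\zeta(2s\|A\|_\infty\tau)$, not $\hat\xi(2\|A\|_\infty\tau)$, $\hat\zeta(2\|A\|_\infty\tau)$. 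The factor $s$ is not ``absorbed by the sum collapsing''; the rescaling identity $\xi^*_k=\hat\xi_k(2\|A\|_\infty)^k$ that you invoke fails for your recursion (the one-line induction actually produces $(2s\|A\|_\infty)^k$), so your argument only yields a weaker, stage-number-dependent majorant, which is not the theorem.

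The repair is to use the fact that the hypothesized majorants are the same for every stage \emph{before} touching the coefficients $a_{\ell m}$: applying Lemma~\ref{lem:3} stage by stage with a single common pair $(\xi,\zeta)$ gives one and the same majorant series for every summand $s(Q^{(m)},V^{(m)})(V_i^{(m)}-V_j^{(m)})$, respectively $s(Q^{(m)},V^{(m)})\bigl(g_i(Q^{(m)})-g_j(Q^{(m)})\bigr)$, and then
\begin{equation*}
\tau\sum_{m=1}^s a_{\ell m}\,(\cdots)^{(m)} \;\unlhd\; \Bigl(\sum_{m=1}^s |a_{\ell m}|\Bigr)\,\tau\,\bigl(\mbox{common majorant}\bigr) \;\unlhd\; \|A\|_\infty\,\tau\,\bigl(\mbox{common majorant}\bigr),
\end{equation*}
since $\|A\|_\infty$ is the maximal row sum; no sum over stages survives. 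With this, a single pair $(\xi,\zeta)$ suffices (no $s$-tuple is needed), the fixed-point operator is exactly $\widehat\Psi$ of Lemma~\ref{lem:5} with $\tau/2$ replaced by $\|A\|_\infty\,\tau$, and the substitution $\tau\mapsto 2\|A\|_\infty\tau$ in (\ref{eq:hxi}) delivers the coefficients $\hat\xi_k(2\|A\|_\infty)^k$ and $\hat\zeta_k(2\|A\|_\infty)^k$ of the statement. Two smaller points: in the final step the quantity you need is $\sum_m|b_m|$, i.e.\ the row-sum norm of $b^T$ (with the reading $\|b\|_\infty=\max_m|b_m|$ your inequality $\sum_m|b_m|\le\|b\|_\infty$ is false for $s>1$); and the inequality $K_i(q^0)/\nu(q^0)\le\min_j\|q_i^0-q_j^0\|$ you cite belongs to the physical-time argument of Theorem~\ref{th:phys} and plays no role here --- what is needed are the inequalities (\ref{eq:ss^2}), exactly as in Theorems~\ref{th:reg2} and~\ref{th:reg_midpoint}.
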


As a corollary of Theorems~\ref{th:reg2} and \ref{th:hatreg2}, the local error of the application of one step of length $\Delta \tau \in \{ \tau \in \R\  :  \ |\tau| < \hat R/(2\, \|A\|_{\infty})\}$ can be bounded as follows.
\begin{corollary}
If the Runge-Kutta scheme is of order $p\geq 1$, then 
for $1\leq i \leq N$,
\begin{align*}
\|\tilde V_i - V_i\| & \leq s(q^0,v^0) \, K_i(q^0)\, \|b\|_{\infty}\, \sum_{k=p+1}^\infty (\hat  \zeta_k - \zeta_k)\, (2 \, \|A\|_{\infty} \, |\Delta \tau|)^k, \\
\|\tilde Q_i - Q_i\| & \leq \max\left(s(q^0,v^0) \, \|v_i^0\|,  s(q^0,v^0)^2 \, K_i(q^0) \right) \, \|b\|_{\infty}  \, \sum_{k=p+1}^\infty (\hat  \xi_k - \xi_k)\, (2 \, \|A\|_{\infty} \, |\Delta \tau|)^k. 
\end{align*}
\end{corollary}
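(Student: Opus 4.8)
The plan is to read the bound off the two one-sided majorant estimates already available — Theorem~\ref{th:reg2} for the exact solution and Theorem~\ref{th:hatreg2} for the Runge--Kutta approximation — together with the order of the scheme. Let $(Q,V)$ denote the exact solution of (\ref{eq:Nbodytau})--(\ref{eq:icondtau}) and $(\tilde Q,\tilde V)$ its one-step approximation, and write the local errors as $\tilde V_i - V_i = (\tilde V_i - v_i^0) - (V_i - v_i^0)$ and $\tilde Q_i - Q_i = (\tilde Q_i - q_i^0) - (Q_i - q_i^0)$. Theorem~\ref{th:reg2} majorizes $V_i - v_i^0$ by $s(q^0,v^0)K_i(q^0)\zeta$ and $Q_i - q_i^0$ by $\max(s(q^0,v^0)\|v_i^0\|, s(q^0,v^0)^2 K_i(q^0))(\xi-1)$, while Theorem~\ref{th:hatreg2} majorizes $\tilde V_i - v_i^0$ and $\tilde Q_i - q_i^0$ by the same prefactors times $\|b\|_\infty \sum_{k\ge 1}\hat\zeta_k (2\|A\|_\infty\tau)^k$ and $\|b\|_\infty \sum_{k\ge 1}\hat\xi_k (2\|A\|_\infty\tau)^k$ respectively. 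The corollary is the combination of these two families of estimates.

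The first step is to invoke the order of the method: if the scheme has order $p\ge 1$, the formal power series $\tilde Q_i(\tau)$ and $\tilde V_i(\tau)$ coincide with $Q_i(\tau)$ and $V_i(\tau)$ through degree $p$ in $\tau$, so that the coefficient of $\tau^k$ in $\tilde V_i - V_i$ and in $\tilde Q_i - Q_i$ vanishes for $1\le k\le p$; both error series therefore start at degree $p+1$. This is exactly what turns the sums $\sum_{k\ge 1}$ furnished by Theorem~\ref{th:hatreg2} into the truncated sums $\sum_{k\ge p+1}$ in the statement. For $k\ge p+1$ one then has $(\tilde V_i - V_i)_k = (\tilde V_i - v_i^0)_k - (V_i - v_i^0)_k$ (and likewise for the positions), and the degree-$\le p$ contribution of $\tilde V_i - v_i^0$, being identical to that of $V_i - v_i^0$, drops out; in the majorant this removal of the exact contribution already present in the discrete estimate is what produces the coefficient-wise differences $\hat\zeta_k-\zeta_k$ and $\hat\xi_k-\xi_k$.

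The point requiring care — and the main obstacle — is the difference form of the bound: applying the triangle inequality directly to $(\tilde V_i - V_i)_k = (\tilde V_i - v_i^0)_k - (V_i - v_i^0)_k$ produces $\hat\zeta_k+\zeta_k$ (up to the common factor $s(q^0,v^0)K_i(q^0)\|b\|_\infty(2\|A\|_\infty)^k$) rather than $\hat\zeta_k-\zeta_k$, so a cruder estimate is not enough. Closing this gap is the crux. The natural route is to set up a Noetherian fixed-point recursion for the error series $(\tilde Q_i - Q_i,\tilde V_i - V_i)$ itself — built from the difference of the two defining operators $\Psi$ and $\widehat{\Psi}$ of Lemmas~\ref{lem:4} and~\ref{lem:5} after the rescaling $\tau\mapsto 2\|A\|_\infty\tau$ and multiplication by $\|b\|_\infty$ — and to show, by the same induction on $k$ used in the proofs of Theorems~\ref{th:phys} and~\ref{th:reg}, that it is preserved by the set of series majored by $s(q^0,v^0)K_i(q^0)\|b\|_\infty\sum_{k\ge p+1}(\hat\zeta_k-\zeta_k)(2\|A\|_\infty\tau)^k$ (and its position analogue). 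Here the order-$p$ consistency is used twice: it guarantees that the error series, hence the recursion, has no terms of degree $\le p$, and it is what makes $\hat\zeta_k-\zeta_k\ge 0$ and $\hat\xi_k-\xi_k\ge 0$ precisely on the range $k\ge p+1$ that actually occurs, so that the subtracted series is itself a legitimate majorant.
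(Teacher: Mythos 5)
Your setup (rewriting the local error as $(\tilde V_i-v_i^0)-(V_i-v_i^0)$, invoking order $p$ to kill the coefficients of degree $\leq p$, and quoting Theorems~\ref{th:reg2} and \ref{th:hatreg2}) is exactly what the paper has in mind --- it offers no proof beyond ``corollary of Theorems~\ref{th:reg2} and \ref{th:hatreg2}''. But the step you yourself flag as the crux is a genuine gap, and the fix you sketch cannot work. Your claim that order-$p$ consistency ``is what makes $\hat\zeta_k-\zeta_k\geq 0$ precisely on the range $k\geq p+1$'' is false: $(\xi,\zeta)$ and $(\hat\xi,\hat\zeta)$ are the fixed points of $\Psi$ and $\widehat\Psi$ and know nothing about the order of the scheme. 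Computing the fixed points gives $\zeta_1=1,\ \hat\zeta_1=\tfrac12$, $\zeta_2=\tfrac{13}{4},\ \hat\zeta_2=\tfrac{13}{8}$, and $\zeta_3=\tfrac{205}{12}\approx 17.1$ versus $\hat\zeta_3=\tfrac{513}{64}\approx 8.0$; so already for the midpoint rule ($p=2$, $2\|A\|_\infty=\|b\|_\infty=1$) the first term $(\hat\zeta_3-\zeta_3)\tau^3$ of your proposed majorant is negative. Independently of any finite computation, the radii of convergence satisfy $\hat R\approx 0.0948 > R\approx 0.0840$, so by Cauchy--Hadamard $\zeta_k>\hat\zeta_k$ for infinitely many $k$, whatever $p$ is. A series with negative coefficients cannot majorize anything (see the remark after Definition~\ref{defunlhd}), so no induction or Noetherian fixed-point argument can show the error series is majored by $\sum_{k\geq p+1}(\hat\zeta_k-\zeta_k)(2\|A\|_\infty\tau)^k$; and ``taking the difference of $\Psi$ and $\widehat\Psi$'' does not produce a majorant-type recursion for the error, because the relation $\unlhd$ is simply not compatible with subtraction.

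What does follow from the two theorems, together with the vanishing of the coefficients of $\tilde V_i-V_i$ and $\tilde Q_i-Q_i$ up to degree $p$ (note the statement's left-hand sides should read $\|\tilde V_i-V_i(\tau)\|$ and $\|\tilde Q_i-Q_i(\tau)\|$), is the triangle-inequality bound you dismissed, i.e.\ the same right-hand sides with $\|b\|_\infty\,\hat\zeta_k\,(2\|A\|_\infty)^k+\zeta_k$ in place of $\|b\|_\infty(\hat\zeta_k-\zeta_k)(2\|A\|_\infty)^k$ (and likewise for $\hat\xi_k,\xi_k$). The difference form printed in the corollary is not justified by anything in the paper either, and your proposal does not supply the missing justification; if you want a provable statement, prove the sum form, or establish a genuinely correlated estimate of the error coefficients, which the cited theorems alone do not give.
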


\begin{remark}
\label{rem:leRK}
This Corollary shows that the local error of a Runge-Kutta method applied with constant step-size $\Delta \tau$ to the time-renormalized $N$-body problem (\ref{eq:Nbodytau})--(\ref{eq:s}) cannot  drastically increase during a close encounter    (provided that $2 \, \|A\|_{\infty} \, |\Delta \tau|$ is small enough). According to the obtained estimates of the local error, only the velocity components of the local error can increase (at a mild rate) as an extreme close encounter occurs, since in that case $s(q^0,v^0) \, K_i(q^0) \sim s(q^0,v^0)^{-1}$. 
\end{remark}

\section{Alternative time-renormalization functions}

Clearly, (\ref{eq:s}) is not the unique globally defined time-reparametrization function that is uniform in the sense that any solution of  (\ref{eq:Nbodytau}) admits an holomorphic extension as a function of the complex time $\tau$ in a strip $\{ \tau \in \C\ : \ |\mathrm{Im}(\tau)| <  R\}$. 
As shown in~\cite{antonana2020}, 
a computationally less complex alternative to function (\ref{eq:s}) is 
\begin{equation}
\label{eq:s1}
s(q,v) = \left(
\sum_{1 \leq i < j \leq N}
\left( \frac{||v_i-v_j||}{||q_i-q_j||}\right)^2+ A(q)\, \sum_{1 \leq i < j \leq N}
 \frac{1}{\|q_i-q_j\|}
\right)^{-1/2}
\end{equation}
where
\begin{equation*}
 A(q) = \sum_{1 \leq i < j \leq N}
 \frac{G\, (m_i + m_j)}{\|q_i-q_j\|^2} .
\end{equation*}
The key observation is that 
\begin{equation*}
  \forall \, 1\leq i < j \leq N, \quad M_{ij}(q) := K_i(q) +K_j(q) \leq A(q),
\end{equation*}
so that all proofs of Sect. ~\ref{sect:Nbptre} and \ref{sect:DottrNbe} remain valid with   (\ref{eq:s1}) instead of (\ref{eq:s}). 

Recall that we chose function (\ref{eq:s}) so that it is, up to a constant factor, a real-analytic lower bound of the estimate of the radius of convergence given in Corollary~\ref{cor:1}.  Actually, in the proofs of Sections~\ref{sect:Nbptre} and \ref{sect:DottrNbe}, this fact is not strictly required. In fact, the essential ingredients of our proofs are the inequalities  (\ref{eq:ss^2}). 
Hence, we may determine $s (q,v)$ as a real-analytic lower bound of 
\begin{equation}
\label{eq:protoS}
\left( \max\left( \max_{1 \leq i < j \leq N}\frac{\|v_i- v_j\|}{ \|q_i - q_j\|}, 
 \max_{1 \leq i < j \leq N}\sqrt{  \frac{M_{ij}(q)}{\alpha\, \|q_i- q_j\|}}\right)\right)^{-1}, 
\end{equation}
with some $\alpha>0$, so that
\begin{equation}
  \label{eq:ss^2bis}
s(q,v) \leq \frac{\|q_i-q_j\|}{\|v_i-v_j\|} \quad \mbox{ and } \quad s(q,v)^2 \leq \frac{\alpha\, \|q_i-q_j\|}{M_{i j}(q)}.
\end{equation}
By replacing the $\infty$-norm of the vector with $(N-1)N$ components in (\ref{eq:protoS}) by its $2p$-norm (for some positive integer $p$) and $M_{ij}(q)$ by $A(q)$, we arrive at
\begin{equation}
\label{eq:s2}
s(q,v) = \left(\sum_{1 \leq i < j \leq N}
         \frac{\|v_i - v_j\|^{2p}}{\|q_i - q_j\|^{2p}}  
         + A(q)^p
         \sum_{1 \leq i < j \leq N} \frac{1}{(\alpha\, \|q_i - q_j\|)^p}
       \right)^{-\frac{1}{2p}}.
\end{equation}

A  time-renormalization that does not depend on the velocities can be derived from (\ref{eq:s2}) by bounding $\|v_i-v_j\|$ in terms of the absolute value  of the potential energy
\begin{equation*}
  U(q) = G\, \sum_{1 \leq i < j \leq N} \frac{m_i\, m_j}{\|q_{i}-q_{j}\|} 
\end{equation*}
and the total energy $E_0=\frac{1}{2} \sum_{i=1}^N m_i\, \|v_i\|^2 - U(q)$. More precisely,
 \begin{equation*}
   \|v_i-v_j\| \leq \sqrt{2} \, (m_{i}^{-1/2}+m_{j}^{-1/2}) (E_0 + U(q)),
 \end{equation*} 
 which leads to
\begin{equation}
  \label{eq:s3}
  \begin{split}
    s(q, E_0) &:=  \left((E_0 + U(q))^p\, \sum_{1 \leq i < j \leq N} \frac{4\, (m_{i}^{-1/2}+m_{j}^{-1/2})^{2p}}{\|q_{i}-q_{j}\|^{2p}}  \right. \\
              &\qquad +  \left. 
                \frac{A(q)^p}{\alpha^p} \, \sum_{1 \leq i < j \leq N} \frac{1}{\|q_{i}-q_{j}\|^p}
              \right)^{-\frac{1}{2p}}.
   \end{split} 
\end{equation}
All the results in Section~\ref{sect:Nbptre} (resp. Section~\ref{sect:DottrNbe}))  can be proven for the two alternative functions (\ref{eq:s2}) and (\ref{eq:s3}),  with different majorant series $\xi, \zeta$ (resp. $\hat \xi, \hat \zeta$) depending on the prescribed parameters $\alpha$ and $p$, having different radius of convergences $R$ (resp. $\hat R$). 
Note however that (\ref{eq:s3}) is no longer valid  in the limit when one of the masses vanishes, and hence it is not expected to perform well with too small mass ratios.

In practice, we suggest to consider $p=2$ or $p=4$, and $\alpha=3$.  That choice for $\alpha$ is motivated by 
comparing the lower and upper bounds of the estimate of the radius of convergence given in Corollary~\ref{cor:1} given in Remark~\ref{rem:fit}), to the following tighter ones
$$ 
\frac{\sqrt{2}-1}{\mu_0 + \sqrt{\nu_0/3}} < \frac{r(\eta_0)}{\sqrt{\mu_0^2 + \nu_0}} <  \frac{0.48}{\mu_0 + \sqrt{\nu_0/3}}.
$$

\section{Numerical experiment}
\label{sect:ne}

In order to illustrate the application of a Runge-Kutta scheme to time-renormalized $N$-body problems, 
we consider the a 15-body model of  the Solar System that includes
\begin{itemize}
\item the Sun, 
\item the Earth-Moon binary considered as mass point centered at its barycenter,
\item the remaining seven planets and Pluto, and
\item  the five main bodies of the asteroid belt: Ceres, Pallas, Vesta, Iris and Bamberga.
\end{itemize}

We consider the initial values at Julian day (TDB) 2440400.5 (the 28th of June of 1969), obtained form the DE430 ephemerides~\cite{Folkner2014}, renormalized so that the center of mass of the 15 bodies is at rest, and run the numerical integrations for $20000$ years. Several close approaches between some of the asteroids occur in that interval of time.

We have applied the 16th order implicit Runge-Kutta method of collocation type with Gauss-Legendre nodes, implemented with fixed point iteration as described in~\cite{antonana2017}. In particular, we have performed our numerical experiments in 
the Julia programming language~\cite{Bezanson2017julia}, using the Julia package IRKGaussLegendre.jl~\cite{IRKGL16} integrated in
SciML/DifferentialEquations.jl~\cite{Rackauckas2017}.

\begin{figure}[ht]
\begin{center}
\resizebox{35em}{!}{\includegraphics{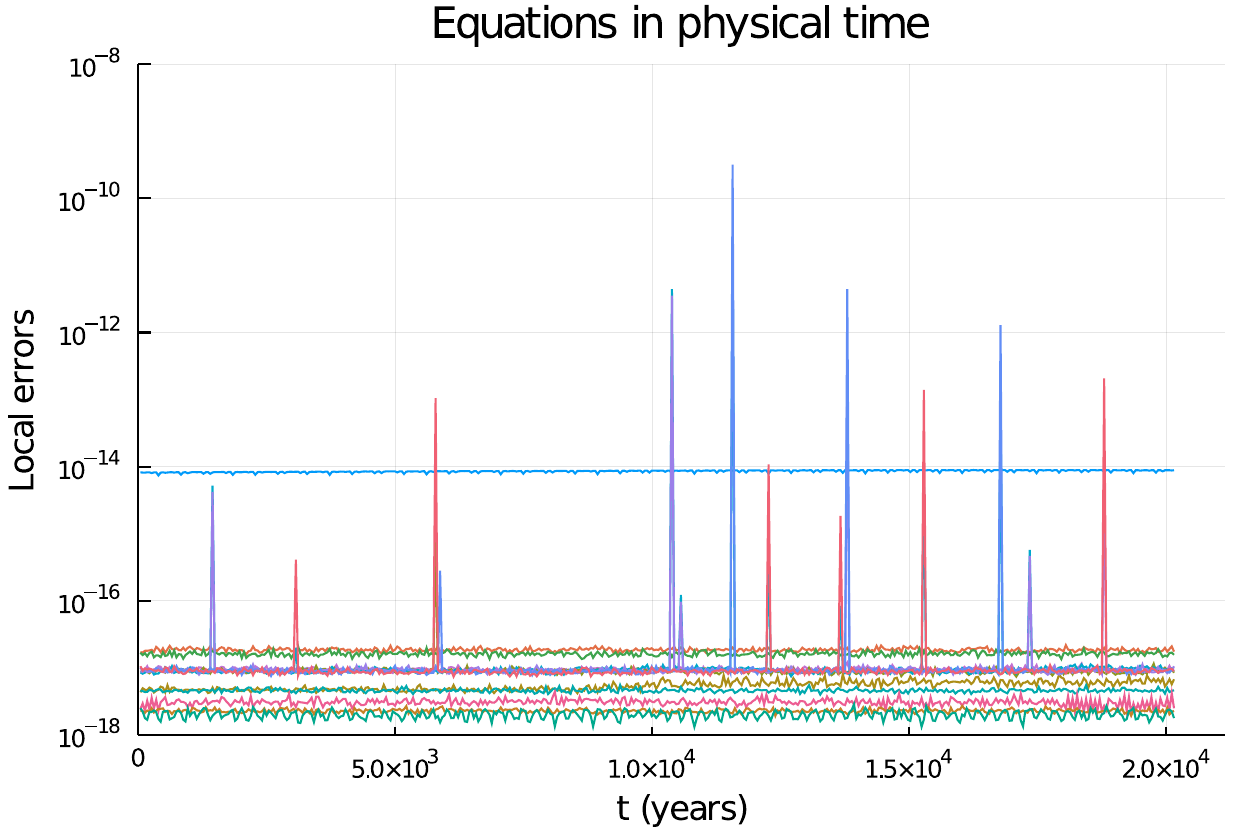}}\\
\resizebox{35em}{!}{\includegraphics{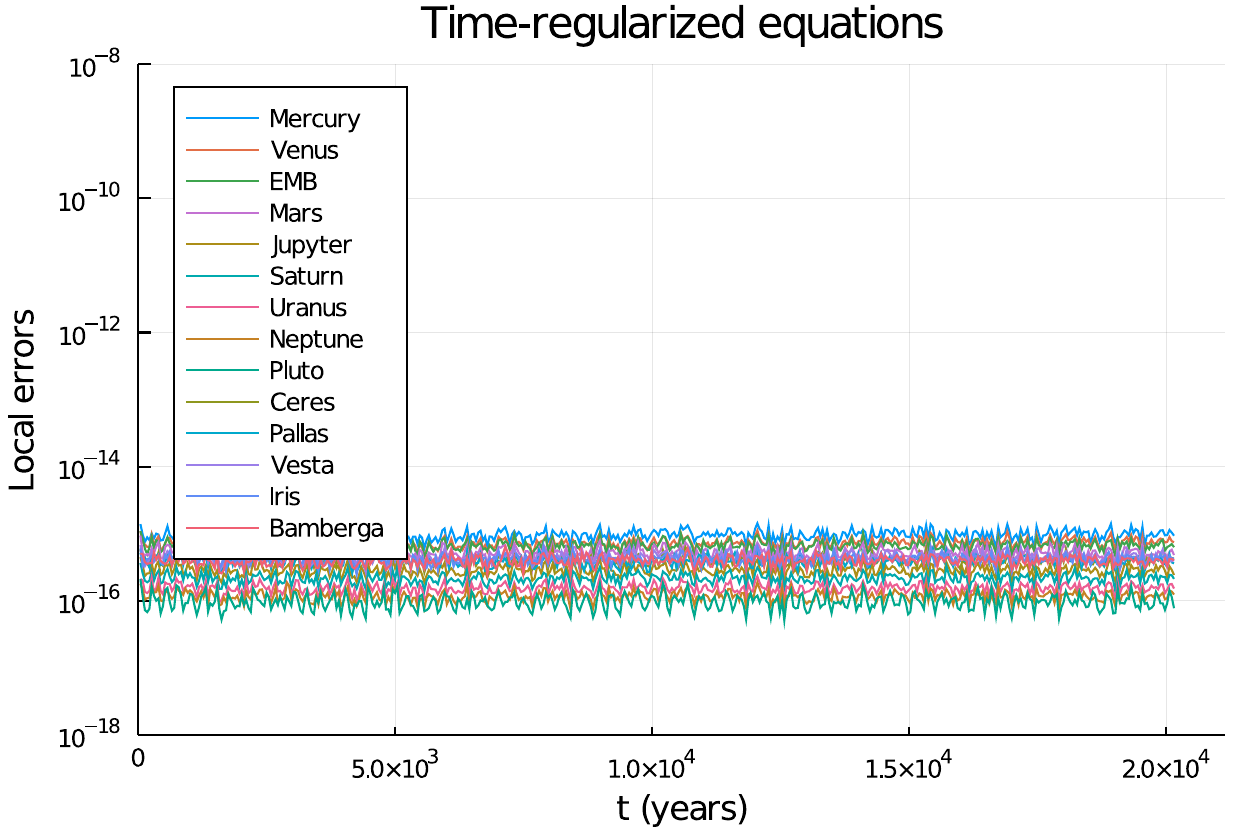}}
\end{center}
\caption{Local errors in position. Top:  integration in physical time with 920000 time-steps of size $\Delta t=8$. Bottom: integration in time-renormalized equations with  920000 time-steps of size $\Delta \tau=0.7196076352409821$.}
\label{fig:local_errors}
\end{figure}

We have first integrated the problem in the equations with physical time (\ref{eq:Nbody}) with a time-step $\Delta t$ of 8 days. The local errors in positions for each of the bodies (except for the sun) are displayed in the upper plot in Figure~\ref{fig:local_errors}. We observe that for most of the steps, the local error is dominated by Mercury's error. Occasionally the errors of two asteroids become considerably larger than Mercury's error, due to a close approach. The highest spike of the local error occur after 10338 years, and is due to a close approach between Pallas and Vesta.
The local errors in velocities (not shown here) present a similar behavior.  
The evolution of global errors in positions are displayed in the upper plot in Figure~\ref{fig:global_errors}.

We then have integrated the time-renormalized problem (\ref{eq:Nbodytau}) with different renormalization functions. 
For a fair comparison with  the integration in physical time, we chose the step-size $\Delta \tau$ in such a way that the same number of steps (and approximately the same CPU time) is required in each case.

We first have tried with the time-renormalization function (\ref{eq:s}), but the spikes of the local errors (not shown here) due to close encounters of the asteroids, although considerably reduced,  do not completely disappear in that case. 
In Figure~\ref{fig:local_errors}, we display the evolution of the local errors in position obtained with the renormalization function (\ref{eq:s2}) with $\alpha=3$ and $p=4$. Observe that there are no spikes of local errors due to close approaches. 
The local errors in velocities (not shown here) neither present such spikes.
In addition, the local errors of Mercury are smaller than those in the integration in physical time. 

However, the local errors of the rest of the bodies (except for pairs of asteroids in a close approach) become larger for the integration in the time-renormalized equations.  In order to understand that, notice that the local errors in physical time of the outer planets are considerably smaller than those of Mercury, because the dominant terms of accelerations of the outer planets are relatively smoother than those of Mercury.
In the time-renormalized equations, the comparatively highly oscillatory motion of Mercury is inherited through the time-renormalized function by the equations of all of the bodies, leading to local errors in positions of similar size for Mercury and the rest of the bodies.

In bottom plot in Figure~\ref{fig:global_errors}, the evolution of global errors for the integration of the time-renormalized equations (for  (\ref{eq:s2}) with $\alpha=3$ and $p=4$) is displayed.  The large errors in the positions of the asteroids due to close approaches observed in  Figure~\ref{fig:global_errors} for the integration in physical time are not present in the integration of the time-renormalized equations.

\begin{figure}[ht]
\begin{center}
\resizebox{35em}{!}{\includegraphics{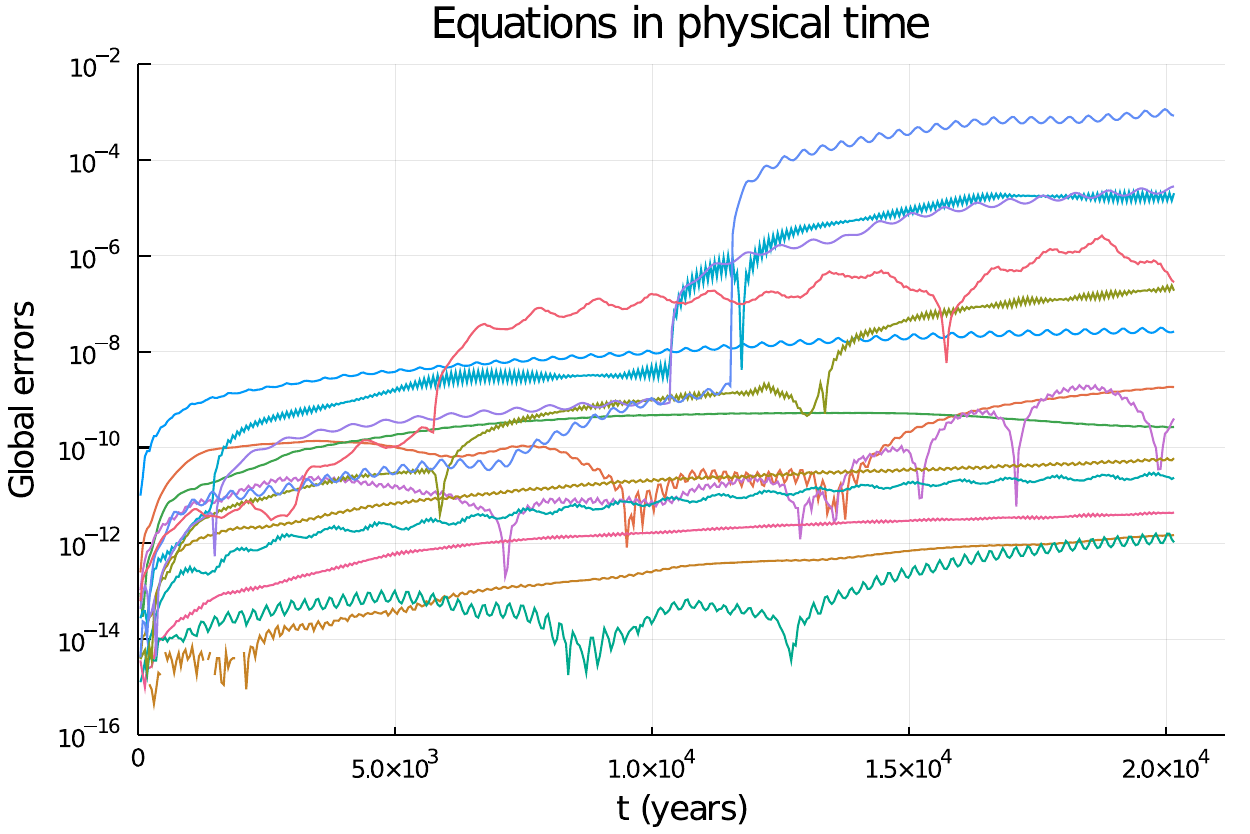}}\\
\resizebox{35em}{!}{\includegraphics{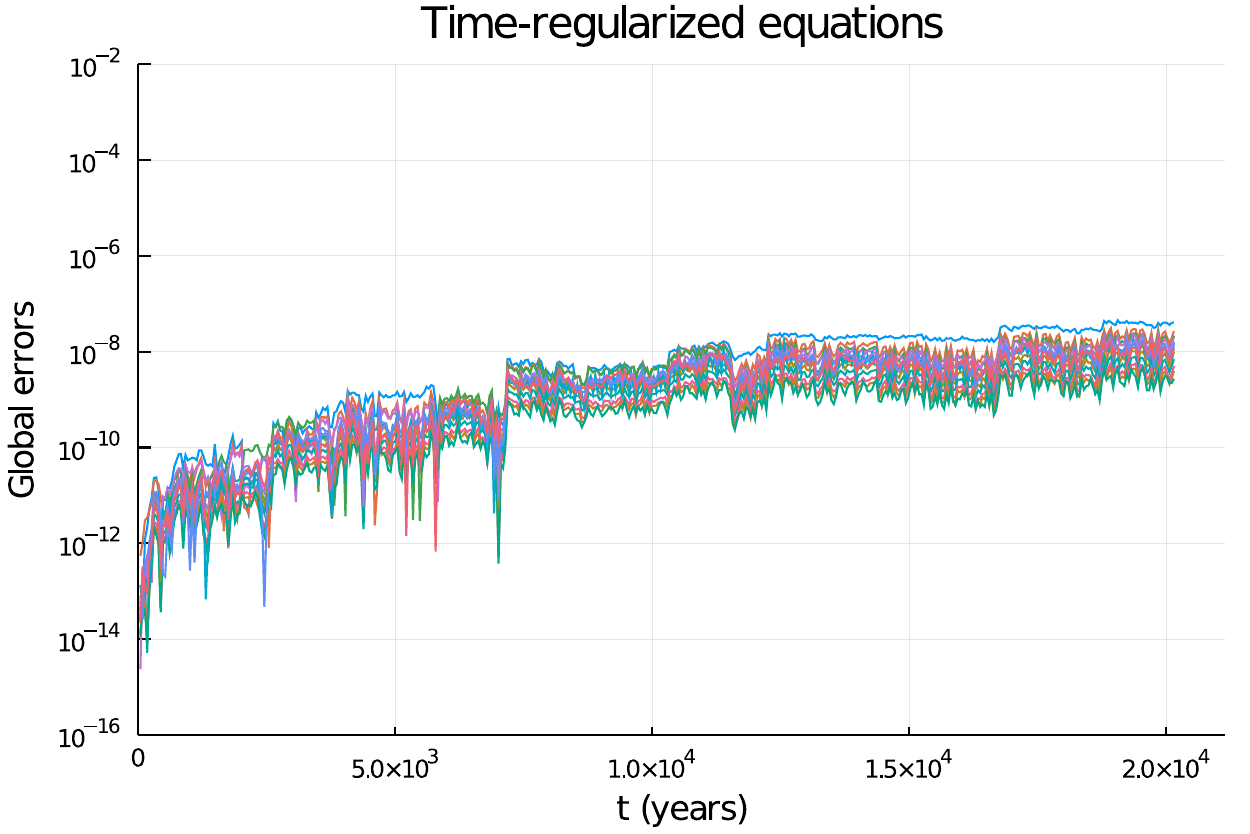}}
\end{center}
\caption{Global errors in position.  Top: integration in physical time with 920000 time-steps of size $\Delta t=8$. Bottom: integration in time-renormalized equations with  920000 time-steps of size $\Delta \tau=0.7196076352409821$}
\label{fig:global_errors}
\end{figure}

\section*{Acknowledgements}

Ma and AM have received funding by the Spanish State Research Agency through project PID2019-104927GB-C22 (AEI/FEDER, UE) with acronym GNI-QUAMC, and also from the Department of Education of the Basque Government through the Consolidated Research Group MATHMODE (IT1294-19).

\bibliographystyle{alpha}

\end{document}